\providecommand{\U}[1]{\protect\rule{.1in}{.1in}}
\newtheorem{theorem}{Theorem}
\newtheorem{theorem*}{Example}
\newtheorem{conjecture}[theorem]{Conjecture}
\newtheorem{lemma}[theorem]{Lemma}
\newenvironment{proof}[1][Proof]{\noindent\textbf{#1.} }{\ \hfill \rule{0.5em}{0.5em}\bigskip}
\begin{document}

\title{Remarks on the Local Irregularity Conjecture}
\author{Jelena Sedlar$^{1,3}$,\\Riste \v Skrekovski$^{2,3}$ \\[0.3cm] {\small $^{1}$ \textit{University of Split, Faculty of civil
engineering, architecture and geodesy, Croatia}}\\[0.1cm] {\small $^{2}$ \textit{University of Ljubljana, FMF, 1000 Ljubljana,
Slovenia }}\\[0.1cm] {\small $^{3}$ \textit{Faculty of Information Studies, 8000 Novo
Mesto, Slovenia }}\\[0.1cm] }
\maketitle

\begin{abstract}
A locally irregular graph is a graph in which the end-vertices of every edge
have distinct degrees. A locally irregular edge coloring of a graph $G$ is any
edge coloring of $G$ such that each of the colors induces a locally irregular
subgraph of $G$. A graph $G$ is colorable if it admits a locally irregular
edge coloring. The locally irregular chromatic index of a colorable graph $G$,
denoted by $\chi_{%
\rm{irr}%
}^{\prime}(G)$, is the smallest number of colors used by a locally irregular
edge coloring of $G$. The Local Irregularity Conjecture claims that all
graphs, except odd length path, odd length cycle and a certain class of cacti,
are colorable by $3$ colors. As the conjecture is valid for graphs with large
minimum degree and all non-colorable graphs are vertex disjoint cacti, we take
direction to study rather sparse graphs. In this paper, we give a cactus graph
$B$ which contradicts this conjecture, i.e. $\chi_{%
\rm{irr}%
}^{\prime}(B)=4$. Nevertheless, we show that the conjecture holds for
unicyclic graphs and cacti with vertex disjoint cycles.

\end{abstract}

\textit{Keywords:} locally irregular edge coloring; Local Irregularity
Conjecture; unicyclic graphs; cactus graphs.

\textit{AMS Subject Classification numbers:} 05C15

\section{Introduction}

All graphs mentioned in this paper are considered to be simple and finite. An
edge coloring of a graph is \emph{neighbor-sum-distinguishing} if any two
neighboring vertices differ in the sum of the colors of the edges incident to
them. This notion was first introduced in \cite{Karonski} and the following
conjecture was proposed there.

\begin{conjecture}
[1-2-3 Conjecture]Every graph $G$ without isolated edges admits a
neighbor-sum-distinguishing edge-coloring with the colors $\{1,2,3\}$.
\end{conjecture}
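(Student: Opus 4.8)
The plan is to reformulate the statement as an integer edge-weighting problem: we seek a map $w\colon E(G)\to\{1,2,3\}$ so that the vertex sums $s(v)=\sum_{e\ni v}w(e)$ satisfy $s(u)\neq s(v)$ whenever $uv\in E(G)$. First I would dispose of the trivial obstructions. The hypothesis that $G$ has no isolated edge is exactly what is needed, since a $K_2$ component forces $s(u)=s(v)$; conversely every other small case can be checked by hand. One may also assume $G$ is connected and has at least three vertices, treating each component separately, so throughout the argument $\delta(G)\ge 1$ and some vertex has degree at least two.

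My main line of attack would be a constructive, greedy procedure in the spirit of the Kalkowski--Karo\'nski--Pfender method. Fix an ordering $v_1,\dots,v_n$ of the vertices and process them in turn, assigning weights to the still-unweighted incident edges. The key is to maintain an invariant guaranteeing \emph{flexibility}: at the moment a vertex is finalized, its sum can still be chosen to avoid the at most $\deg v$ forbidden values coming from its already-finalized neighbors. Keeping, for each vertex, a small window of achievable sums, and showing that the weights committed to the already-processed ``downward'' edges can always be compensated using the remaining ``upward'' edges, lets the induction go through. Carried out carefully this yields a neighbor-sum-distinguishing weighting, but with weights drawn from a slightly larger palette; the known unconditional bound obtained this way is $\{1,\dots,5\}$, and pushing the palette down to $\{1,2,3\}$ is precisely the crux.

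To reach the optimal three weights I would combine two reductions. For graphs of chromatic number at most three one can proceed algebraically: encode the distinctness constraints $s(u)\neq s(v)$ as the nonvanishing of a single graph polynomial in the edge-variables and invoke the Combinatorial Nullstellensatz, which succeeds provided a suitable coefficient (a signed count of orientations, or a permanent-type quantity attached to $G$) is nonzero while the variables range over a set of size three. For graphs of higher chromatic number I would first strip away the dense, highly structured pieces, reducing to the regular or near-regular case, where parity and degree-counting arguments pin the sums down more tightly and the local windows above remain usable.

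The hard part, and the reason this statement has resisted a short proof, is the final step: with only the three values $\{1,2,3\}$ the flexibility window in the greedy argument collapses, so a local correction at one vertex propagates along edges and can manufacture new conflicts elsewhere, whereas the same corrections are harmless once five weights are available. On the algebraic side one must instead certify that the relevant coefficient of the graph polynomial is nonzero for \emph{every} graph, a statement equivalent to a delicate nonvanishing of an orientation count for which no uniform argument is known. Any complete proof must therefore control these global interactions, rather than relying on the purely local adjustments that already settle the weaker five-weight bound.
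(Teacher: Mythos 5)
The statement you are asked about is a \emph{conjecture}, not a theorem: the paper records the 1-2-3 Conjecture only as background motivation and explicitly notes that the best known unconditional bound is five weights and that ``the 1-2-3 Conjecture remains open.'' The paper contains no proof of it, so there is nothing to compare your argument against --- and, more importantly, your text is not a proof either. What you have written is an accurate survey of the two standard attack routes (the Kalkowski--Karo\'nski--Pfender greedy/window method, and the Combinatorial Nullstellensatz applied to the graph polynomial), but you concede in your own final paragraph that the greedy window ``collapses'' at three weights and that the required nonvanishing of the relevant polynomial coefficient is ``a delicate nonvanishing \ldots for which no uniform argument is known.'' A proof cannot end by naming the step that is missing; as it stands, every genuinely new idea needed to get from the known $\{1,\dots,5\}$ bound down to $\{1,2,3\}$ is absent.

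Concretely, the gaps are: (i) the greedy invariant you describe is never formulated precisely, and no argument is given that the ``flexibility window'' can be maintained with only three available weights --- this is exactly where the known proofs stop at five; (ii) the Nullstellensatz route requires exhibiting a nonzero coefficient of degree at most two in each edge variable for an arbitrary graph, and you give no candidate monomial nor any mechanism for computing or bounding that coefficient; (iii) the proposed reduction of high-chromatic-number graphs to ``regular or near-regular'' pieces is not described in any checkable way. If your goal is to engage with this paper, note that its actual contributions concern the distinct (though historically related) notion of locally irregular edge colorings, where it proves the Local Irregularity Conjecture for unicyclic graphs and cacti with vertex-disjoint cycles and refutes it in general via the bow-tie graph; none of that machinery bears on the 1-2-3 Conjecture itself.
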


This conjecture attracted a lot of interest \cite{AdarioBerry123,
Kalkowski123, Przibilo12, Przibilo123, Wang123} and for a survey we reffer the
reader to \cite{SeamoneSurvey}. The best upper bound is that every graph
without isolated edges admits a neighbor-sum-distinguishing edge-coloring with
five colors \cite{KaronskiSol}, but the 1-2-3 Conjecture remains open.

This variant of edge coloring and the 1-2-3 Conjecture motivated introduction
of similar variants of edge coloring. A \emph{locally irregular} graph is any
graph in which the two end-vertices of every edge differ in degree. A
\emph{locally irregular }$k$\emph{-edge coloring}, or $k$\emph{-liec} for
short, is any edge coloring of $G$ with $k$ colors such that every color
induces a locally irregular subgraph of $G$. This variant of edge coloring was
introduced in \cite{Baudon5}. A third related edge coloring variant is is the
\emph{neighbor multiset-distinguishing edge-coloring}, where neighboring
vertices must have assigned distinct multisets of colors on incident edges. In
\cite{AdarioBerry}, it was established that every graph without isolated edges
admits the neighbor multiset-distinguishing edge-coloring with four colors.
Notice that every locally irregular edge coloring is also a neighbor
multiset-distinguishing edge-coloring, but the reverse does not have to hold.

In this paper we focus our attention to locally irregular edge colorings
exclusively, and we say a graph is \emph{colorable} if it admits such a
coloring. The \emph{locally irregular chromatic index} of a colorable graph
$G,$ denoted by $\chi_{%
\rm{irr}%
}^{\prime}(G)$ is the smallest $k$ such that $G$ admits a $k$-liec. In
\cite{Baudon5}, the family of graphs $\mathfrak{T}$ has been defined as follows:

\begin{itemize}
\item $\mathfrak{T}$ contains the triangle $K_{3}$,

\item if $G$ is a graph from $\mathfrak{T}$, then a graph $H$ obtained from
$G$ by identifying a vertex $v\in V(G)$ of degree $2,$ which belongs to a
triangle of $G,$ with an end-vertex of an even length path or with an end
vertex of an odd length path such that the other end vertex of that path is
identified with a vertex of a triangle.
\end{itemize}

Note that every graph $G\in\mathfrak{T}$ has odd size. A \emph{cactus graph}
is any graph in which cycles are edge disjoint. Notice that $\mathfrak{T}$ is
a special family of cacti. Also, if we imagine triangles to be vertices and
paths attached to vertices of a triangle as edges, we might informally say
that $G$ has tree-like structure. For the sake of simplicity, we define a
broader family $\mathfrak{T}^{\prime}$ as the family obtained from
$\mathfrak{T}$ by introducing to it all odd length paths and all odd length
cycles. Notice that $\mathfrak{T}^{\prime}$ is a subclass of vertex-disjoint
cactus graphs. It was established in \cite{Baudon5} that a connected graph $G$
is not colorable if and only if $G\in\mathfrak{T}^{\prime}$. Also, the
following conjecture on the irregular chromatic index was proposed.

\begin{conjecture}
[Local Irregularity Conjecture]\label{Con_nonTareColorable}For every connected
graph $G\not \in \mathfrak{T}^{\prime}$, it holds that $\chi_{%
\rm{irr}%
}^{\prime}(G)\leq3.$
\end{conjecture}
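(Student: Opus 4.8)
The plan is to prove the bound by induction on the block--cut-tree structure of $G$. Since the conjecture is already known for graphs of large minimum degree, and since every non-colorable connected graph lies in $\mathfrak{T}'$ --- a subclass of cacti, as recalled above --- the binding cases are sparse graphs whose blocks are bridges or cycles, while blocks that are $2$-connected with extra edges carry enough degree freedom to be treated more easily. I would therefore color the leaves of the block--cut tree first and propagate inward, at each step invoking the inductive hypothesis on the graph obtained by deleting a pendant block.

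First I would record the base cases as reusable building blocks: a tree that is not an odd path admits a $3$-liec, and an even cycle admits a $2$-liec obtained by splitting it into length-two subpaths, whereas an odd cycle is non-colorable and hence excluded unless it carries pendant structure that destroys its regularity and pushes it out of $\mathfrak{T}'$. For a pendant path or pendant block attached at a cut vertex $v$, I would delete it to obtain a smaller connected $G'$, check that $G' \notin \mathfrak{T}'$ (handling the finitely many small exceptions by hand), apply induction to get a $3$-liec of $G'$, and then extend the coloring across the deleted part. The extension is carried out greedily from $v$ outward, using the three available colors together with the freedom to raise the degree of $v$ through the reattached edges so that each color class stays locally irregular.

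The hard part will be precisely this extension step at cut vertices, because local irregularity is a constraint internal to each color class and is acutely sensitive to degree changes: reattaching a pendant block alters the degree of $v$ in the color classes meeting $v$, which can destroy the local irregularity that the inductive coloring enjoyed on $G'$ and force recoloring that may cascade. The crux is therefore an extension lemma asserting that any $3$-liec of $G'$ can be adjusted within a bounded neighborhood of $v$ to absorb the new edges without propagating changes. The most delicate configurations are exactly those resembling members of $\mathfrak{T}'$ --- cut vertices incident to several odd cycles, or to odd paths joining triangles --- where the parity obstruction that makes $\mathfrak{T}'$ non-colorable must be shown to dissolve the instant $G$ leaves $\mathfrak{T}'$; proving that leaving $\mathfrak{T}'$ always restores enough freedom to complete the extension is where I expect the argument to be most demanding.
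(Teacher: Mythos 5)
You are attempting to prove a statement that this paper in fact \emph{disproves}, so no proof strategy can succeed. The bow-tie graph $B$ of Figure \ref{Fig_bowGraph} --- a cactus with four triangles, two sharing a vertex on each side of a cut edge joining the two shared vertices --- is connected, colorable, and not in $\mathfrak{T}^{\prime}$, yet $\chi_{\mathrm{irr}}^{\prime}(B)=4$: in any hypothetical $3$-liec, a counting argument forces both end-vertices of the cut edge to have degree exactly three in the color of that edge, so that color class fails to be locally irregular. Your plan breaks at precisely the step you flagged as the crux, the extension lemma at cut vertices: your guiding hope that ``leaving $\mathfrak{T}^{\prime}$ always restores enough freedom to complete the extension'' is false. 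In $B$ the cut edge joins two vertices each incident to two edge-disjoint triangles; this configuration lies outside $\mathfrak{T}^{\prime}$, but the parity-type obstruction that makes $\mathfrak{T}^{\prime}$ non-colorable does not dissolve there, and no bounded local recoloring around the cut vertex can absorb the reattached pendant block with only three colors. So the extension lemma you need is simply not true in the generality you require, and any induction over the block--cut tree must founder on such configurations where cycles share vertices.

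That said, your machinery is essentially sound on the classes where the conjecture does hold, and it is close to what the paper actually does there: the paper proves $\chi_{\mathrm{irr}}^{\prime}(G)\leq3$ for unicyclic graphs (Theorem \ref{Tm_unicyclic}) and for cacti with \emph{vertex-disjoint} cycles, by induction on the number of cycles, removing a proper end-cycle at its root vertex and extending colorings across the cut using the shrub decomposition and $2$-aliecs of \cite{Baudon6} (Theorems \ref{Tm_BaudonSchrub} and \ref{Tm_BaudonTree}), together with Lemma \ref{Lemma_inversion} to classify the inversion-resistant shrub-based colorings and Lemma \ref{Lemma_spidey} to force a monochromatic spidey around a high-degree cut vertex. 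The hypothesis that cycles are vertex-disjoint is exactly what guarantees your cut-vertex extension step can always be completed; once two cycles share a vertex, as in $B$, it can fail. If you wish to pursue a correct general statement, the paper's closing conjecture replaces the bound $3$ by $4$, and your block--cut-tree induction would be a natural framework for attacking that weaker bound instead.
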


Let us mention some of the results related to Conjecture
\ref{Con_nonTareColorable}. For general graphs it was first established
$\chi_{%
\rm{irr}%
}^{\prime}(G)\leq328$ \cite{Bensmail}, then it was lowered to $\chi_{%
\rm{irr}%
}^{\prime}(G)\leq220$ \cite{Luzar}. Fore some special classes of graphs
Conjecture \ref{Con_nonTareColorable} is shown to hold, namely for trees
\cite{Baudon6}, graphs with minimum degree at least $10^{10}$ \cite{Przibilo},
$k$-regular graphs where $k\geq10^{7}$ \cite{Baudon5}.

In this paper we will show that every unicyclic graph $G$ which does not
belong to $\mathfrak{T}^{\prime}$ admits a $3$-liec, thus establishing that
Conjecture \ref{Con_nonTareColorable} holds for unicyclic graphs. We will
further extend this result to cactus graphs with vertex disjoint cycles.
Finally, we will provide an example of a colorable graph $B$ with $\chi_{%
\rm{irr}%
}^{\prime}(B)=4$ showing thus that Conjecture \ref{Con_nonTareColorable} does
not hold in general. Possibly this is the only counterexample to the conjecture.

\section{Revisiting the trees}

Since a unicyclic graph is obtained from a tree by adding a single edge to it,
we first need to introduce the notation and several important results for
trees from \cite{Baudon6}. Also, we will establish several auxiliary results
for trees, which will be useful throughout the paper.

First, a \emph{shrub} is any tree rooted at a leaf. The only edge in a shrub
$G$ incident to the root we will call the \emph{root edge} of $G$. An
\emph{almost locally irregular }$k$\emph{-edge coloring} of a shrub $G,$ or
$k$\emph{-aliec} for short, is an edge coloring of $G$ which is either
$k$-liec or a coloring in which only the root edge is locally irregular
(notice that in this case the root edge is an isolated edge of its color i.e.
it is not adjacent to any other edge of the same color). A \emph{proper }%
$k$\emph{-aliec} is $k$-aliec which is not a $k$-liec. The following results
for trees were established in \cite{Baudon6}.

\begin{theorem}
\label{Tm_BaudonSchrub}Every shrub admits a $2$-aliec.
\end{theorem}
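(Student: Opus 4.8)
The plan is to argue by induction on the number of edges of the shrub $G$, which is rooted at a leaf $r$ with root edge $e_0=rv$. The base case is the single edge $K_2$: colour $e_0$ with colour $1$; since $r$ and $v$ both have degree $1$ in colour $1$, the root edge is an isolated edge of its colour, so this is a (proper) $2$-aliec. For the inductive step, let $u_1,\dots,u_k$ be the neighbours of $v$ other than $r$ (so $\deg_G(v)=k+1$), and for each $i$ let $G_i$ be the sub-shrub consisting of $v$, the edge $vu_i$, and the whole subtree lying beyond $u_i$, rooted at the leaf $v$ with root edge $vu_i$. Each $G_i$ has strictly fewer edges, so by the induction hypothesis it admits a $2$-aliec. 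These colourings overlap only at $v$, and together with a colour for $e_0$ they partition $E(G)$, so the task reduces to assembling them into a single $2$-aliec of $G$.

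The key observation that makes the assembly tractable is that the root edge $e_0$ of $G$ can never obstruct us. If $e_0$ receives colour $c$ and some edge $vu_i$ also has colour $c$, then $\deg_c^G(v)\ge 2>1=\deg_c^G(r)$, so $e_0$ is locally irregular; if no edge $vu_i$ has colour $c$, then $e_0$ is an isolated edge of its colour and we simply land in the proper-aliec alternative. Either way the coloring is a legitimate $2$-aliec as far as $e_0$ is concerned. Inside each $G_i$ every edge except possibly $vu_i$ is already locally irregular, and all degrees $\deg_c^G(u_i)=\deg_c^{G_i}(u_i)$ are \emph{unchanged} when we glue the pieces together, because $u_i$ keeps all of its incident edges. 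Hence the only conditions left to verify concern the edges $vu_i$ at the newly-enlarged vertex $v$: for each $i$, writing $c_i$ for the colour of $vu_i$, we need $\deg_{c_i}^G(v)\neq \deg_{c_i}^G(u_i)$. Note that this is exactly the place where a \emph{proper} aliec of $G_i$ (where $vu_i$ was isolated, i.e.\ $\deg_{c_i}^{G_i}(v)=1=\deg_{c_i}^{G_i}(u_i)$) can become repaired once $v$ acquires further edges of colour $c_i$.

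I therefore have two independent freedoms to exploit: for each sub-shrub I may globally swap its two colours (changing which colour $vu_i$ contributes to $v$ and which forbidden value $\deg_{c_i}^G(u_i)$ comes with it), and I may choose the colour of $e_0$ as a buffer. Writing $n_1,n_2$ for the resulting degrees of $v$ in the two colours, with $n_1+n_2=k+1$, the whole problem collapses to a finite assignment question: distribute the $k$ sub-shrubs and the root edge between the two colours so that no edge $vu_i$ placed in colour $c$ has $n_c$ equal to its (fixed) forbidden value $\deg_c^G(u_i)$. The main obstacle will be showing that such an assignment always exists, since in principle the forbidden values could conspire to block a given target $(n_1,n_2)$. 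I expect to resolve this by steering $(n_1,n_2)$ through its $k+2$ possible splits, using the colour swaps to move the finitely many ``tight'' sub-shrubs out of the offending colour and using $e_0$ to shift $n_1$ versus $n_2$ by one; if a clean uniform argument is elusive, I would strengthen the induction hypothesis to also record which values of $\deg(v)$ in the root colour are attainable by a $2$-aliec of each $G_i$, so that the feasibility of the combination step becomes automatic. This counting/feasibility step at $v$ is the crux; everything else is bookkeeping guaranteed by the induction hypothesis.
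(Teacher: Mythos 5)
A preliminary remark: the paper offers no proof of Theorem \ref{Tm_BaudonSchrub} at all --- it is quoted from \cite{Baudon6} --- so your proposal cannot be matched against an in-paper argument and must stand on its own. Your skeleton is sound as far as it goes: the induction on $|E(G)|$, the decomposition into sub-shrubs $G_i$ rooted at $v$, the observation that the root edge $rv$ is never an obstruction (either it shares its colour with some $vu_i$ and is then locally irregular, or it is isolated in its colour and you land in the proper-aliec alternative), and the observation that after gluing only the edges at $v$ need re-checking, since each $u_i$ keeps all of its incident edges. The genuine gap is that the step you yourself identify as the crux is never carried out: you do not prove that the sub-shrubs can always be distributed between the two colours so that no edge $vu_i$ lands in a colour class at $v$ whose size equals its forbidden value $d_i=d^{c_i}_{G_i}(u_i)$. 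Phrases such as ``I expect to resolve this'' and ``if a clean uniform argument is elusive, I would strengthen the induction hypothesis'' describe a plan, not a proof; since everything else in the write-up is routine bookkeeping, the theorem is in effect left unestablished.

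The gap is, however, closable exactly within your framework and without strengthening the induction hypothesis. Write $N=k+1$ for the degree of $v$ in $G$, so there are $k$ constrained edges $vu_i$ and one free edge $rv$ at $v$. A split into colour classes of sizes $m$ and $N-m$ is unrealizable only if more than $m$ constrained edges are forced into the class of size $m$ (that is, $|\{i: d_i=N-m\}|>m$) or more than $N-m$ are forced into the other class ($|\{i: d_i=m\}|>N-m$), with the degenerate case that for $N$ even and $m=N/2$ a single $d_i=N/2$ blocks the split. Since the sets $\{i: d_i=t\}$ are pairwise disjoint over the values $t$, blocking the split $m=N-j$ consumes at least $j+1$ sub-shrubs drawn from $\{i:d_i=N-j\}\cup\{i:d_i=j\}$, and these witness sets are disjoint for different $j$; summing over the $\lfloor N/2\rfloor+1$ essentially distinct splits already requires strictly more than $k=N-1$ sub-shrubs for every $N\ge2$. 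Hence some split is always realizable, with the free root edge absorbing the cardinality adjustment. You should supply an argument of this kind (or reproduce the explicit case analysis of \cite{Baudon6}) before the proof can be considered complete.
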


\begin{theorem}
\label{Tm_BaudonTree}For every colorable tree $T,$ it holds that $\chi_{%
\rm{irr}%
}^{\prime}(T)\leq3.$ Moreover, $\chi_{%
\rm{irr}%
}^{\prime}(T)\leq2$ if $\Delta(T)\geq5.$
\end{theorem}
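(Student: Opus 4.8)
The plan is to root $T$ at an internal vertex and reduce to the shrub result, Theorem \ref{Tm_BaudonSchrub}, merging the pieces with care at the root. Concretely, first I would pick a vertex $r$ of maximum degree $\Delta=\Delta(T)$ and regard each of the $\Delta$ subtrees hanging at $r$ as a shrub $G_1,\dots,G_\Delta$ whose root edge is the edge joining $r$ to the corresponding neighbour $v_i$. By Theorem \ref{Tm_BaudonSchrub} each $G_i$ carries a $2$-aliec using the colors $\{1,2\}$; I would call $G_i$ \emph{good} if its coloring is an honest $2$-liec and \emph{bad} if it is a proper $2$-aliec, in which case the root edge $rv_i$ is an isolated edge of its color inside $G_i$. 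Since the $2$-aliec of each shrub uses no third color, all edges strictly below the root edges are already locally irregular in their own color, so every possible defect of the union is concentrated at $r$ and on the root edges.

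For the bound $\chi'_{\mathrm{irr}}(T)\le 3$, I would repair these defects with a third color. Each bad root edge $rv_i$ would be recolored with color $3$; because the shrubs used only colors $1,2$, every such $v_i$ then has color-$3$ degree exactly $1$, so the color-$3$ edges form a star centred at $r$ which is locally irregular as soon as at least two shrubs are bad. The remaining task is to kill the degree collisions at $r$ in colors $1$ and $2$: for a good shrub the neighbour $v_i$ has degree at least $2$ in the color of its root edge (otherwise that root edge would fail to be locally irregular, contradicting that the shrub is good), whereas $r$ may well acquire the same degree in that color once several root edges of the same color meet. I would resolve each such collision either by swapping the two colors inside an offending shrub (which only relabels an already valid $2$-liec) or by moving a single good root edge to color $3$, checking in the latter case both that the siblings of $v_i$ remain locally irregular and that the color-$3$ star stays locally irregular. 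The genuinely annoying cases are the small ones — exactly one bad shrub, or $\Delta$ too small to create the needed slack — which I would dispose of by a short separate analysis, using colorability of $T$ to exclude the degenerate path configurations of $\mathfrak{T}'$.

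For the sharper statement $\chi'_{\mathrm{irr}}(T)\le 2$ when $\Delta\ge 5$, no third color is allowed, so the whole argument must be carried out inside $\{1,2\}$. Here I would exploit the freedom to flip the two colors in each shrub independently: for shrub $i$ this offers two options for the pair $(\text{root-edge color},\ \deg(v_i)\text{ in that color})$, and I must choose one option per shrub so that, writing $d_j$ for the number of root edges finally colored $j$, one has $d_{j_i}\neq \deg_{j_i}(v_i)$ for every $i$ and $d_j\neq 1$ whenever some bad root edge is colored $j$. The point of $\Delta\ge 5$ is that five or more root edges give enough room to balance the two color-degrees at $r$ away from the finitely many forbidden neighbour values, which I would make precise by a counting/pigeonhole argument over the admissible color assignments. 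I expect this balancing at the root to be the main obstacle: the local-irregularity constraints in the two colors are coupled through the shared vertex $r$, and showing that the $\Delta\ge 5$ slack always suffices — while separately clearing the leftover low-degree cases for the weaker $3$-color bound — is where the real work lies.
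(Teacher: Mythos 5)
This theorem is not proved in the paper at all: it is quoted verbatim from \cite{Baudon6}, so there is no in-paper proof to match your argument against. Judged on its own, your plan follows the same general strategy as the literature (decompose $T$ into shrubs at a chosen vertex, apply Theorem \ref{Tm_BaudonSchrub} to each, and repair the defects, which are indeed all concentrated at the root edges), but the repair step as you describe it contains a genuine error and the hard part is left undone. The error: ``moving a single good root edge to color $3$'' changes the color-$1$ (or color-$2$) degree of $v_i$, and hence can destroy local irregularity between $v_i$ and its \emph{children} inside the shrub, not just between $v_i$ and its siblings; you only propose to check the siblings and the color-$3$ star. The move that actually works, and that this paper uses in Lemma \ref{Lemma_spidey}, is to substitute one of the two colors by the third \emph{throughout} an entire shrub (e.g.\ turning $\phi^1_{a,b}$ into $\phi^1_{c,b}$), which is a mere renaming inside the shrub and therefore cannot create interior defects. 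With that correction the inversion-resistant configurations identified in Lemma \ref{Lemma_inversion} ($a$-sequence $3,2,2$ at a degree-$3$ root, $4,3,3,2$ at a degree-$4$ root) can be handled, but your sketch does not identify these configurations or show they are the only obstructions.

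Beyond that, two substantive pieces are missing rather than merely deferred. First, the case of exactly one bad shrub: your blanket rule produces an isolated color-$3$ edge, which is not locally irregular, and the workaround (leave the bad root edge in $\{1,2\}$ when at least two root edges share its color, or reorganize the coloring otherwise) is not supplied. Second, for $\Delta(T)\geq 5$ the entire content of the statement is the claim that independent inversions of the shrubs can always balance the two color-degrees at $r$ away from all the forbidden values $d^{x}(v_i)$; reducing this to ``a counting/pigeonhole argument over the admissible color assignments'' names the problem without solving it, and the constraints are coupled (choosing how many root edges get color $1$ simultaneously fixes how many get color $2$, and which shrubs may be placed on each side depends on their individual degree data). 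As written, the proposal is a reasonable outline of the known proof strategy but not a proof.
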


If an edge coloring uses at most three colors, we will denote those colors by
$a,b,c.$ A $1$-liec (resp. $2$-liec, $3$-liec) of a graph $G$ will be denoted
by $\phi_{a}(G)$ (resp. $\phi_{a,b}(G),$ $\phi_{a,b,c}(G)$). A $2$-aliec of a
shrub $G$ will be denoted by $\phi_{a,b}(G)$ where $a$ is the color of the
root edge in $G$. Let $a,b,c,d$ be four colors, if $\phi_{a,b}(G)$ is a
$2$-liec of $G$ in colors $a$ and $b,$ then $2$-liec $\phi_{c,d}(G)$ of $G$ in
colors $c$ and $d$ is obtained from $\phi_{a,b}(G)$ by \emph{replacing} colors
$a$ and $b$ for $c$ and $d$ respectively, i.e. $\phi_{c,d}(e)=c$ if and only
if $\phi_{a,b}(e)=a.$ Particularly, $2$-(a)liec $\phi_{b,a}(G)$ is called the
\emph{inversion} of the $2$-(a)liec $\phi_{a,b}(G),$ where colors $a$ and $b$
are replaced. Moreover, let $\phi_{a,b,c}^{i}$ be an edge coloring of a graph
$G_{i}$, for $i=1,\ldots,k,$ and let $\cap_{i=1}^{k}E(G_{i})=\phi.$ For a
graph $G$ such that $E(G)=\cup_{i=1}^{k}E(G_{i}),$ by $\sum_{i=1}^{k}%
\phi_{a,b,c}^{i}$ we will denote the edge coloring of $G$ such that an edge
$e$ is colored by $\phi_{a,b,c}^{i}(e)$ if and only if $e\in E(G_{i}).$

For any color of the edge coloring $\phi_{a,b,c},$ say $a$, we define the
$a$\emph{-degree} of a vertex $v\in V(G)$ as the number of edges incident to
$v$ which are colored by $a$. The $a$-degree of a vertex $v$ is denoted by
$d_{G}^{a}(v).$ Assume that a vertex $v\in V(G)$ has $k$ neighbors
$w_{1},\ldots,w_{k}$ such that each $vw_{i}$ is colored by $a.$ Then the
sequence $d_{G}^{a}(w_{1}),\ldots,d_{G}^{a}(w_{k})$ is called the
$a$\emph{-sequence} of the vertex $v$. We usually assume that neighbors of $v$
are denoted so that the $a$-sequence is non-increasing.

Throughout the paper we will use the technique of finding a $2$-liec for trees
introduced in \cite{Baudon6}. Namely, if $T$ is a tree with maximum degree $5$
or more, then $T$ admits a $2$-liec according to Theorem \ref{Tm_BaudonTree}.
Otherwise, if the maximum degree of $T$ is at most $4,$ let $v$ be a vertex
from $T$ and $w_{1},\ldots,w_{k}$ all the neighbors of $v$ for $k\leq4.$
Notice that $T$ consists of $k$ shrubs $T_{i}$ starting at $v,$ let $T_{i}$
denote a shrub with the root edge $vw_{i}$ and let $\phi_{a,b}^{i}$ denote a
$2$-aliec of $T_{i}$ which exists according to Theorem \ref{Tm_BaudonSchrub}.
Recall that $\phi_{a,b}^{i}(vw_{i})=a$ for every $i\leq k.$ The coloring
$\phi_{a,b}=\sum_{i=1}^{k}\phi_{a,b}^{i}$ is called a \emph{shrub based} edge
coloring of $T.$ We say that a shrub based coloring $\phi_{a,b}$ is
\emph{inversion resistant} if neither $\phi_{a,b}$ is a $2$-liec of $T$ nor
any of the colorings which can be obtained from $\phi_{a,b}$ by color
inversion in some of the shrubs $T_{i}.$ Let us now introduce the following
lemma which stems from the technique used in \cite{Baudon6}.

\begin{lemma}
\label{Lemma_inversion}Let $T$ be a tree with $\Delta(T)\leq4$ and $v$ a
vertex from $T$ of degree $k.$ Let $T_{1},\ldots,T_{k}$ be all the shrubs of
$T$ rooted at $v$ and let $\phi_{a,b}^{i}$ be a $2$-aliec of $T_{i}.$ If
$\phi_{a,b}^{i}$ is a $2$-liec of $T_{i}$ for every $i=1,\ldots,k,$ then the
shrub based coloring $\phi_{a,b}=\sum_{i=1}^{k}\phi_{a,b}^{i}$ can be
inversion resistant in two cases only:

\begin{itemize}
\item if $d_{T}(v)=3$ and the $a$-sequence of $v$ by $\phi_{a,b}$ is $3,2,2$;

\item if $d_{T}(v)=4$ and the $a$-sequence of $v$ by $\phi_{a,b}$ is $4,3,3,2$.
\end{itemize}
\end{lemma}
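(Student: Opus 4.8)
The plan is to localize the whole question at the single vertex $v$. Since each $\phi_{a,b}^i$ is a $2$-liec of $T_i$, and since inverting the two colours inside a shrub turns a $2$-liec into a $2$-liec, every vertex of $T$ other than $v$ keeps all of its incident edges inside one shrub; hence its colour-degrees, and with them the local irregularity of every edge not incident to $v$, are unaffected both by the gluing at $v$ and by any inversions performed in the shrubs. Thus the only edges I ever need to inspect are the root edges $vw_1,\ldots,vw_k$, and the glued coloring (or any of its inversions) fails to be a $2$-liec exactly when one of these root edges is not locally irregular.

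First I would record the degrees. For each $i$ set $s_i=d_{T_i}^a(w_i)$. Because $\phi_{a,b}^i$ is a $2$-liec and $d_{T_i}^a(v)=1$, the root edge forces $s_i\neq 1$, so $s_i\ge 2$; and $\Delta(T)\le 4$ gives $s_i\le d_T(w_i)\le 4$, whence $s_i\in\{2,3,4\}$. The key observation is that inverting $T_i$ recolors $vw_i$ from $a$ to $b$ and simultaneously swaps the $a$- and $b$-degree of $w_i$, so the degree of $w_i$ in the colour actually worn by $vw_i$ equals $s_i$ whether or not $T_i$ is inverted. If $S$ is the set of inverted shrubs and $m=|S|$, then $d^a(v)=k-m$ and $d^b(v)=m$, so $vw_i$ is locally irregular precisely when $s_i\neq k-m$ for $i\notin S$ and $s_i\neq m$ for $i\in S$. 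Consequently the coloring inverted on $S$ is a $2$-liec of $T$ if and only if $S$ contains every index with $s_i=k-m$ and avoids every index with $s_i=m$, where $m=|S|$.

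Next I would turn inversion resistance into counting. Write $n_j=|\{i:s_i=j\}|$ for $j\in\{2,3,4\}$, so $n_2+n_3+n_4=k$; inversion resistance means that for every $m\in\{0,\ldots,k\}$ no admissible set $S$ of size $m$ exists. For a fixed $m$ with $m\neq k-m$ such a set exists iff the forced indices fit and the forbidden ones can be left out, that is iff $n_{k-m}\le m\le k-n_m$; for the critical value $m=k-m$ (possible only for even $k$) the same indices are simultaneously forced and forbidden, so a good set exists there iff $n_m=0$. Running this test for $k=1$ gives a $2$-liec already at $m=0$, and for $k=2$ at $m=1$ (where $n_1=0$ always holds), so neither case can be inversion resistant.

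Finally I would finish the two surviving cases. For $k=3$ the failure of a good set at $m=0,3$ forces $n_3\ge 1$, and its failure at $m=1,2$ forces $n_2\ge 2$; with $n_2+n_3+n_4=3$ this pins down $n_2=2,\ n_3=1,\ n_4=0$, i.e.\ the $a$-sequence $3,2,2$. For $k=4$ the values $m=0,4$ force $n_4\ge 1$, the values $m=1,3$ force $n_3\ge 2$, and the critical value $m=2$ forces $n_2\ge 1$; with $n_2+n_3+n_4=4$ only $n_2=1,\ n_3=2,\ n_4=1$ survives, i.e.\ the $a$-sequence $4,3,3,2$. A short direct check confirms that both sequences are genuinely inversion resistant, so these are exactly the two cases. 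I expect the only real obstacle to be bookkeeping the collision $m=k-m$ correctly and verifying that the inequality $n_{k-m}\le m\le k-n_m$ faithfully captures the existence of an admissible inversion set for each remaining $m$; once that is in place the rest is a finite case check.
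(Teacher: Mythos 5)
Your proposal is correct and follows essentially the same route as the paper: reduce everything to the root edges at $v$, note that the relevant colour-degree of each $w_i$ is invariant under inversion of its shrub, and then determine which $a$-sequences survive all inversions. Your version is in fact more systematic than the paper's (which exhibits ad hoc inversions for the non-resistant sequences and leaves the $k=4$ check to the reader), since your feasibility criterion $n_{k-m}\le m\le k-n_m$, together with the separate treatment of the collision $m=k-m$, rigorously covers inversion of arbitrary subsets of shrubs rather than just single inversions.
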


\begin{proof}
If $d_{T}(v)=1,$ then the shrub based coloring of $T$ equals $\phi_{a,b}^{1},$
which is $2$-liec. If $d_{T}(v)=2,$ then $\phi_{a,b}^{1}+\phi_{b,a}^{2}$ would
be a $2$-liec of $T.$

If $d_{T}(v)=3,$ then the $a$-degree of $v$ by $\phi_{a,b}$ is $3,$ and
inverting colors in one of the shrubs would decrease the $a$-degree of $v$ to
$2$. Therefore, the $a$-sequence of $v$ by $\phi_{a,b}$ must contain $3$ and
$2.$ Considering the two possibilities $4,3,2,$ and $3,3,2$, we see that
$\phi_{a,b}^{1}+\phi_{a,b}^{2}+\phi_{b,a}^{3}$ would be $2$-liec in both of
them. The only remaining possibility is $3,2,2,$ and it is inversion resistant.

Finally, assume $d_{T}(v)=4.$ By a similar consideration as above, we see that
the $a$-sequence of $v$ by $\phi_{a,b}$ must contain $4,$ $3,$ and $2.$
Therefore, we must consider the possibilities $4,4,3,2,$ then $4,3,3,2,$ then
$4,3,2,2.$ It is easily seen that only in the case $4,3,3,2$, the shrub based
coloring $\phi_{a,b}$ is inversion resistant.
\end{proof}

A \emph{spidey} is a tree with radius at most two which consists of a central
vertex $u$ of degree at least $3$ and the remaining vertices have degree at
most $2$ and are at distance at most $2$ from $u$. Notice that every spidey is
locally irregular, hence it admits a $1$-liec. We say that a vertex $v$ of a
spidey $G$ is a \emph{short leg} if it is a leaf which is a neighbor of the
central vertex of $G.$

\begin{figure}[h]
\begin{center}
$%
\begin{array}
[c]{ll}%
\text{a) \raisebox{-1\height}{\includegraphics[scale=0.6]{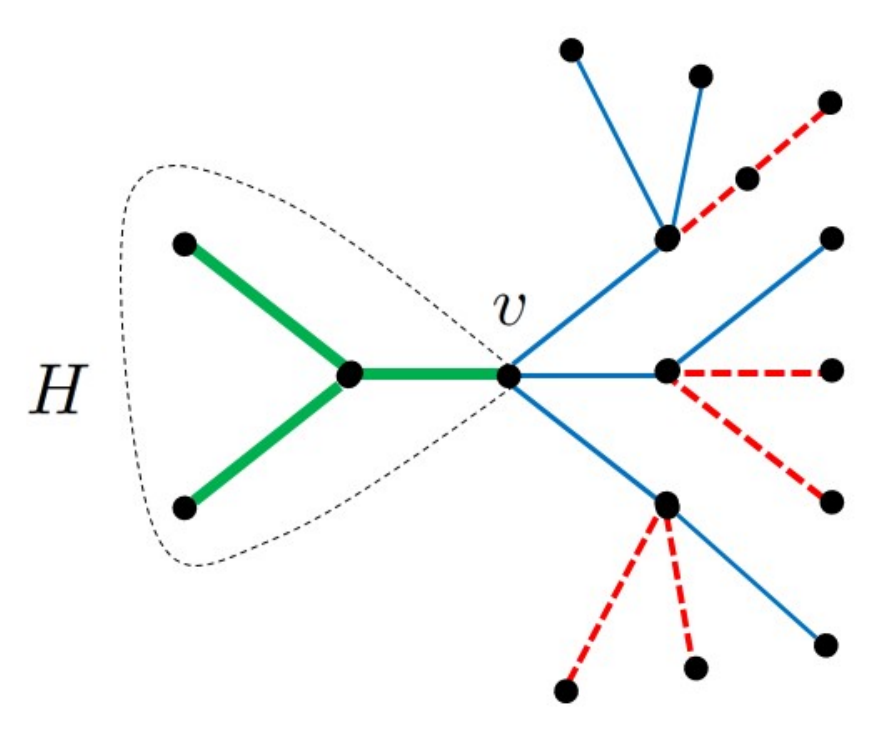}}} &
\text{b) \raisebox{-1\height}{\includegraphics[scale=0.6]{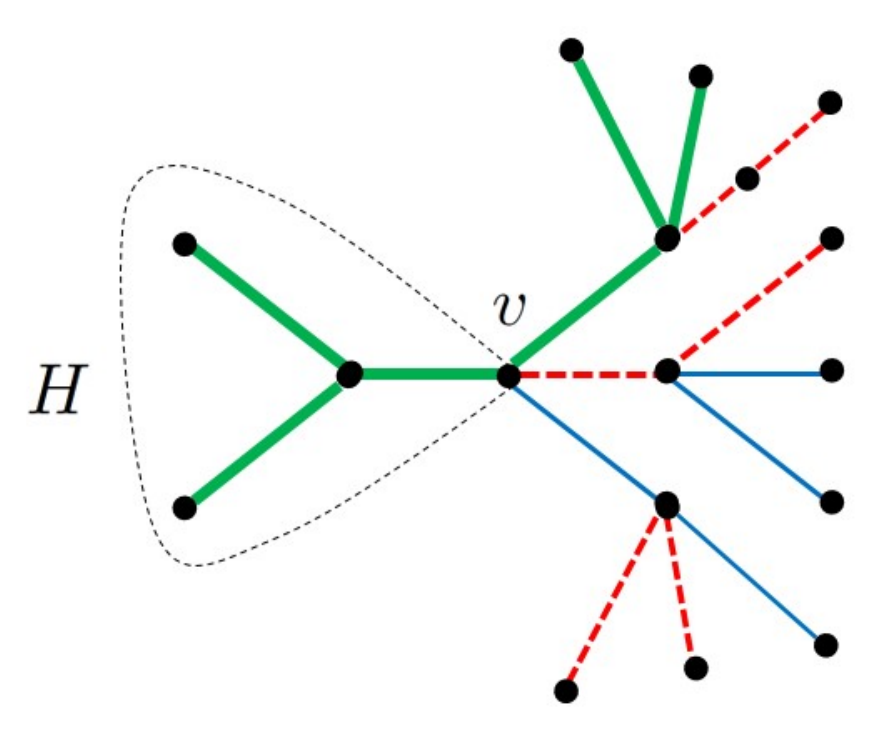}}}%
\end{array}
$
\end{center}
\caption{A graph $G=H+K$ and a vertex $v$ of degree $3$ in $K:$ a) the
coloring $\phi_{c}^{0}+\phi_{a,b}^{1}+\phi_{a,b}^{2}+\phi_{a,b}^{3}$ is not a
$3$-liec of $G$, b) the coloring $\phi_{c}^{0}+\phi_{c,b}^{1}+\phi_{b,a}%
^{2}+\phi_{a,b}^{3}$ is a $3$-liec of $G.$}%
\label{Fig_SpideyGlue3}%
\end{figure}

\begin{lemma}
\label{Lemma_spidey}Let $H$ be a spidey with a short leg $v$ and let $K$ be a
tree. Let $G$ be a graph obtained from $H$ and $K$ by identifying the vertex
$v$ with a vertex from $K.$ Then $G$ admits a $3$-liec such that all edges of
$E(H)$ are colored by a same color.
\end{lemma}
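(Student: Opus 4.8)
The plan is to colour all of $H$ with a single colour, say $c$, and then colour the tree $K$, rooted at $v$, essentially with the two colours $a,b$, turning to $c$ only to repair the finitely many bad configurations at $v$; this is exactly the strategy suggested by Figure \ref{Fig_SpideyGlue3}. Since $H$ is a spidey it is locally irregular, so colouring $E(H)$ monochromatically by $c$ (the part $\phi_{c}^{0}$) makes the restriction of the $c$-subgraph to $H$ locally irregular; in particular the central vertex satisfies $d_{G}^{c}(u)=d_{H}(u)\ge 3$, while every other vertex of $H$ has $c$-degree at most $2$. As $v$ is a short leg, its unique edge inside $H$ is $uv$, so within $H$ we have $d^{c}(v)=1$.

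First I would root $K$ at $v$ and let $K_{1},\dots,K_{m}$ be the shrubs of $K$ with root edges $vz_{1},\dots,vz_{m}$; by Theorem \ref{Tm_BaudonSchrub} each $K_{i}$ has a $2$-aliec $\phi_{a,b}^{i}$ with $\phi_{a,b}^{i}(vz_{i})=a$. The crucial structural point is that rooting at $v$ confines every possible violation of local irregularity to $v$ itself: inside each shrub the $2$-aliec is locally irregular except possibly for the root edge, and the only vertex that $H$ and $K$ share is $v$, whose $H$-edge $uv$ carries colour $c$ while its $K$-edges will carry $a,b$. Consequently, whenever the shrub based colouring $\sum_{i=1}^{m}\phi_{a,b}^{i}$ of $K$ can be turned into a $2$-liec (after inverting colours in some shrubs, as in Lemma \ref{Lemma_inversion}), the colouring $\phi_{c}^{0}+\sum_{i=1}^{m}\phi_{a,b}^{i}$ is already a $3$-liec of $G$ with $E(H)$ monochromatic: the $a$- and $b$-subgraphs live entirely in $K$ and are locally irregular, while the $c$-subgraph is exactly $H$.

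It remains to treat the configurations at $v$ for which no inversion repairs the $\{a,b\}$-colouring of $K$. If $\Delta(K)\ge 5$ then $K$ is not an odd path, hence colourable, and by Theorem \ref{Tm_BaudonTree} it admits a $2$-liec, so $\phi_{c}^{0}$ together with that $2$-liec finishes the proof; thus I may assume $\Delta(K)\le 4$, and in particular $d_{K}(v)\le 4$. The remaining obstructions are then the two inversion-resistant patterns of Lemma \ref{Lemma_inversion} (the $a$-sequences $3,2,2$ and $4,3,3,2$), together with the cases in which some root edge $vz_{i}$ stays isolated in its colour. In each of them I would recolour one suitably chosen shrub $K_{i}$ by replacing its colour $a$ with $c$, i.e. use $\phi_{c,b}^{i}$, possibly combined with inversions in the other shrubs (this is exactly the passage from Figure \ref{Fig_SpideyGlue3}a to Figure \ref{Fig_SpideyGlue3}b). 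Replacing $a$ by $c$ in one shrub leaves that shrub internally locally irregular and lowers $d^{a}(v)$ by one, which breaks the offending $a$-pattern; at the same time it routes the single root edge $vz_{i}$ into colour $c$, so that $d^{c}(v)=2$. Because $v$ is a short leg, this is the only way $c$ enters $K$ at $v$, and since $d_{G}^{c}(u)=d_{H}(u)\ge 3>2=d^{c}(v)$ and, for the shrub I chose, $d^{c}(z_{i})=d_{K_{i}}^{a}(z_{i})\neq 2$, both $c$-edges $uv$ and $vz_{i}$ remain locally irregular; the deeper $c$-edges of $K_{i}$ are internal and retain the local irregularity of the former $a$-subgraph of $K_{i}$. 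Hence the $c$-subgraph, which is now $H$ together with the relabelled $a$-part of $K_{i}$ meeting it only at $v$, stays locally irregular.

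The main obstacle is precisely this last, finite, case analysis at $v$: one must check, pattern by pattern, that a single colour replacement $a\mapsto c$ in a correctly chosen shrub (together with the right inversions in the others) simultaneously destroys the inversion-resistant $a$-sequence and keeps all three colour classes locally irregular, including the degenerate situations where several shrubs are single edges or admit only proper $2$-aliecs. The short-leg hypothesis is what makes this always possible: it pins $d^{c}(v)$ at $2$, safely below $d_{H}(u)\ge 3$, so colour $c$ is permanently available at $v$ as a repair move without ever endangering the $c$-subgraph inherited from $H$.
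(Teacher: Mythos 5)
Your proposal is correct and follows essentially the same route as the paper: colour $H$ monochromatically by $c$, give $K$ a shrub-based $\{a,b\}$-colouring rooted at $v$, and use the short-leg invariant $d^{c}(v)=2<d_{H}(u)=d^{c}(u)$ to reroute one shrub's $a$-class (or just its root edge) into $c$, combined with inversions, in the inversion-resistant patterns of Lemma \ref{Lemma_inversion} and in the isolated-root-edge cases. The finite verification you defer at the end is exactly what the paper carries out, by splitting on the number $l$ of shrubs with a proper $2$-aliec (the cases $l\geq 2$ turn out to be vacuous, $l=1$ needs only the root edge recoloured, and $l=0$ gives the two patterns of Figures \ref{Fig_SpideyGlue3} and \ref{Fig_spideyGlue4}).
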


\begin{proof}
Since $H$ is a spidey, $H$ admits a $1$-liec, say $\phi_{c}^{0}$. Assume first
that a tree $K$ is not colorable, i.e. $K$ is an odd length path. This implies
there exists in $K$ an edge $e_{v}$ incident with $v$ such that $K-e_{v}$ is a
collection of even paths which therefore admits $2$-liec $\phi_{a,b}^{1}.$ The
edge coloring $\phi_{a,b,c}$ of $G$ defined by%
\[
\phi_{a,b,c}(e)=\left\{
\begin{array}
[c]{ll}%
c & \text{if }e=e_{v},\\
\phi_{a,b}^{1}(e) & \text{if }e\in E(K)\backslash\{e_{v}\},\\
\phi_{c}^{0}(e) & \text{if }e\in E(H),
\end{array}
\right.
\]
is a $3$-liec of $G.$

Assume now that $K$ is a colorable tree. If $K$ admits a $2$-liec $\phi
_{a,b}^{1},$ then $\phi_{c}^{0}+\phi_{a,b}^{1}$ is a $3$-liec of $G$ with the
desired property. So, we may assume $K$ is a colorable tree which does not
admit a $2$-liec. Theorem \ref{Tm_BaudonTree} implies $\Delta(T)\leq4.$ Let
$d_{K}(v)=k\leq4$ and let $T_{1},\ldots,T_{k}$ be all the shrubs of $K$ rooted
at $v.$ By Theorem \ref{Tm_BaudonSchrub}, each shrub $T_{i}$ admits a
$2$-aliec $\phi_{a,b}^{i},$ where without loss of generality we may assume
that $\phi_{a,b}^{i}$ is a proper $2$-aliec if and only if $i\leq l.$ We
distinguish the following four cases with respect to $l.$

\medskip\noindent\textbf{Case 1:} $l\geq3.$ Notice that $l\in\{3,4\}$ and
$l\leq k\leq4.$ If $l=3$ and $k=4,$ then $\phi_{a,b}^{1}+\phi_{a,b}^{2}%
+\phi_{a,b}^{3}+\phi_{b,a}^{4}$ would be a $2$-liec of $K,$ a contradiction.
Otherwise, the shrub based coloring $\phi_{a,b}=\sum_{i=1}^{k}\phi_{a,b}^{i}$
would be a $2$-liec of $K,$ again a contradiction.

\medskip\noindent\textbf{Case 2:} $l=2.$ If $k=2,$ then $\phi_{a,b}^{1}%
+\phi_{a,b}^{2}$ is a $2$-liec of $K,$ a contradiction. If $k=3,$ then
$\phi_{a,b}^{1}+\phi_{a,b}^{2}+\phi_{b,a}^{3}$ is a $2$-liec of $K,$ a
contradiction. If $k=4,$ then let $w_{1},\ldots,w_{4}$ be all the neighbors of
$v$ in $K.$ The shrub based coloring $\phi_{a,b}=\sum_{i=1}^{k}\phi_{a,b}^{i}$
is not a $2$-liec only if the $a$-degree of $w_{3}$ or $w_{4}$ by $\phi_{a,b}$
is $4.$ Without loss of generality we may assume that $a$-degree of $w_{3}$ by
$\phi_{a,b}$ is $4,$ but then $\phi_{a,b}^{1}+\phi_{a,b}^{2}+\phi_{a,b}%
^{3}+\phi_{b,a}^{4}$ is a $2$-liec of $K,$ a contradiction.

\medskip\noindent\textbf{Case 3:} $l=1.$ In this case $T_{1}$ is the only
shrub with a proper $2$-aliec $\phi_{a,b}^{1}.$ Let $w_{1}$ be the neighbor of
$v$ in $T_{1},$ we define the coloring $\phi_{a,b}^{\prime}$ of $K$ as follows%
\[
\phi_{a,b,c}^{\prime}(e)=\left\{
\begin{array}
[c]{ll}%
c & \text{if }e=vw_{1},\\
\sum_{i=1}^{k}\phi_{a,b}^{i}(e) & \text{if }e\in E(K)\backslash\{vw_{1}\}.
\end{array}
\right.
\]
Notice that $\phi_{a,b,c}^{\prime}$ is not a liec of $K,$ but $\phi_{c}%
^{0}+\phi_{a,b,c}^{\prime}$ is a $3$-liec of $G=H+K$ with the desired property
that all edges of $H$ are colored by a same color, in this case $c$.

\medskip\noindent\textbf{Case 4:} $l=0.$ Notice that in this case Lemma
\ref{Lemma_inversion} applies on $K$ and $v.$ Therefore, the only cases when
$K$ does not admit a $2$-liec are: i) $d_{K}(v)=3$ and the $a$-sequence of $v$
by the shrub based coloring $\phi_{a,b}=\sum_{i=1}^{k}\phi_{a,b}^{i}$ is
$3,2,2,$ or ii) $d_{K}(v)=4$ and the $a$-sequence of $v$ by $\phi_{a,b}$ is
$4,3,3,2.$ In the first case the coloring $\phi_{c}^{0}+\phi_{c,b}^{1}%
+\phi_{b,a}^{2}+\phi_{a,b}^{3}$ is a $3$-liec of $G$ such that $E(H)$ is
colored by the same color $c$, as it is illustrated in Figure
\ref{Fig_SpideyGlue3}. In the other case, the coloring $\phi_{c}^{0}%
+\phi_{a,b}^{1}+\phi_{c,b}^{2}+\phi_{a,b}^{3}+\phi_{b,a}^{4}$ is a $3$-liec of
$G$ such that $E(H)$ is colored by a same color, as it is illustrated in
Figure \ref{Fig_spideyGlue4}.
\end{proof}

\begin{figure}[h]
\begin{center}
$%
\begin{array}
[c]{ll}%
\text{a) \raisebox{-1\height}{\includegraphics[scale=0.6]{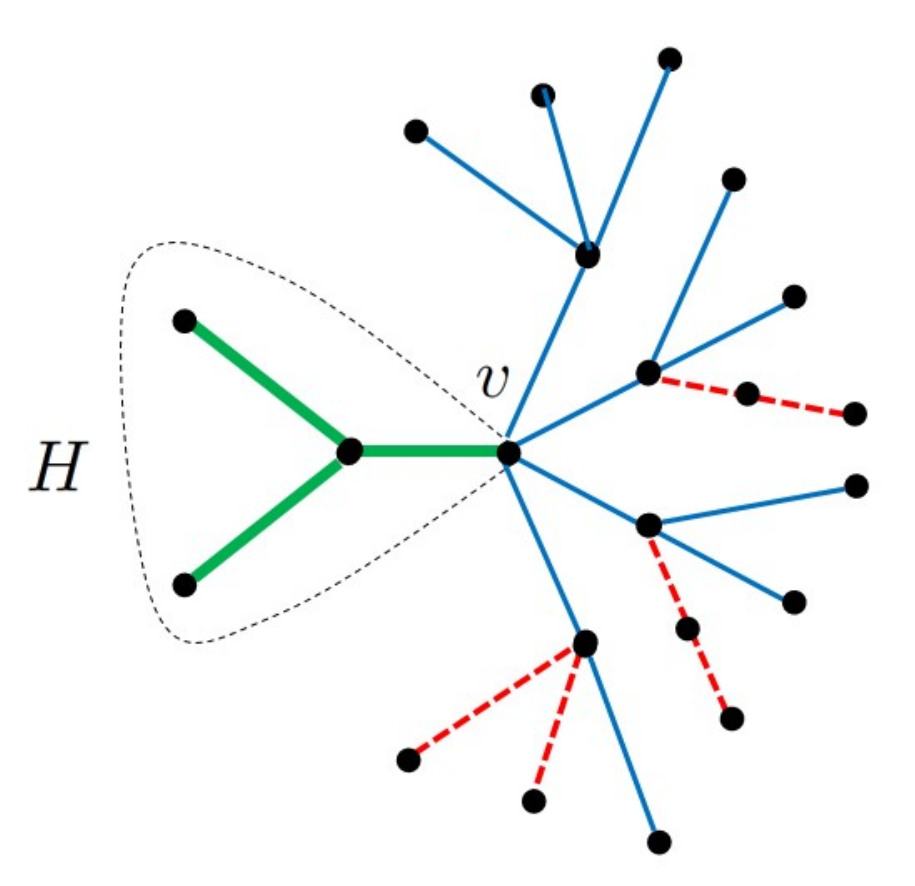}}} &
\text{b) \raisebox{-1\height}{\includegraphics[scale=0.6]{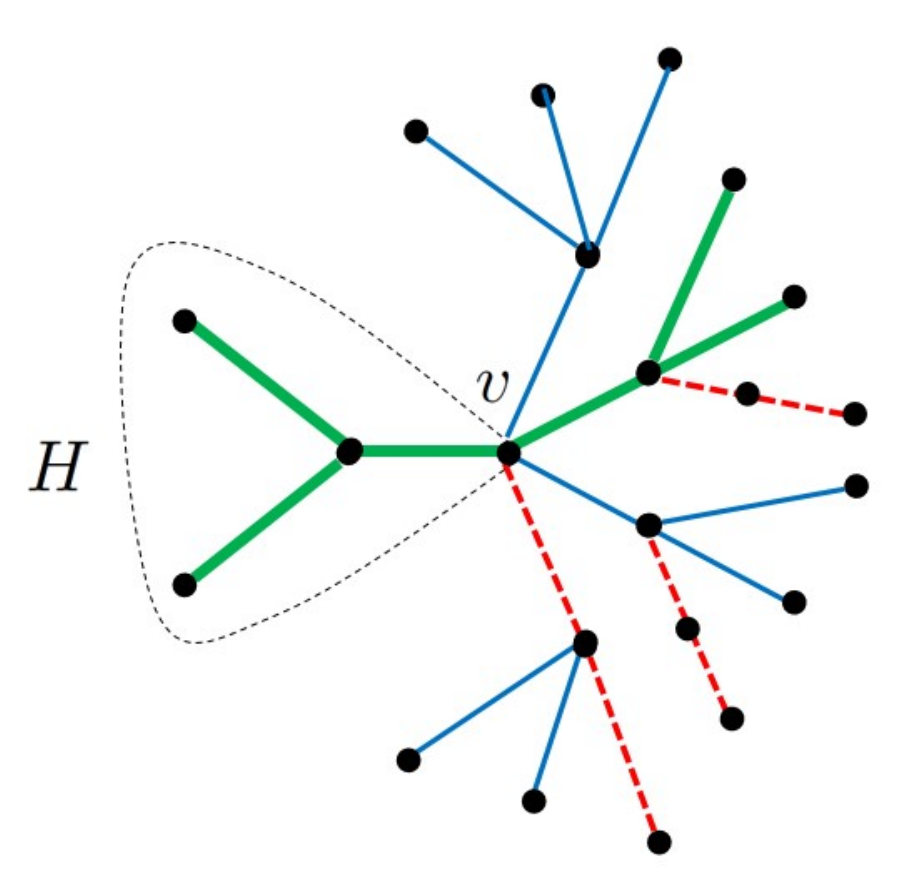}}}%
\end{array}
$
\end{center}
\caption{A graph $G=H+K$ and a vertex $v$ of degree $4$ in $K:$ a) the
coloring $\phi_{c}^{0}+\phi_{a,b}^{1}+\phi_{a,b}^{2}+\phi_{a,b}^{3}+\phi
_{a,b}^{4}$ is not a $3$-liec of $G$, b) the coloring $\phi_{c}^{0}+\phi
_{a,b}^{1}+\phi_{c,b}^{2}+\phi_{a,b}^{3}+\phi_{b,a}^{4}$ is a $3$-liec of
$G.$}%
\label{Fig_spideyGlue4}%
\end{figure}

\section{Unicyclic graphs}

In this section we will establish Conjecture \ref{Con_nonTareColorable} for
unicyclic graphs. It is already known that there exist colorable unicyclic
graphs which do not admit $2$-liec, but require $3$ colors in order for edge
coloring to be locally irregular, namely cycles of length $4k+2,$ for
$k\in\mathbb{N}$. We will show that such cycles are not an isolated family of
unicyclic graphs that require three colors. The main result for unicyclic
graphs is established through the following two lemmas in which we will
consider separately cases whether the cycle of $G$ is a triangle or not.

\begin{lemma}
Let $G$ be a unicyclic graph with the unique cycle being a triangle. If
$G\not \in \mathfrak{T}^{\prime}\mathfrak{,}$ then $\chi_{%
\rm{irr}%
}^{\prime}(G)\leq3.$
\end{lemma}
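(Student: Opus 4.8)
The plan is to reduce the unicyclic graph $G$ with triangle $\Delta=xyz$ to the tree machinery developed above, exploiting the spidey structure whenever possible. First I would root the analysis at the triangle: each of $x,y,z$ is the attachment point of a collection of shrubs (the trees hanging off the triangle vertices), since removing the three triangle edges leaves a forest whose components are rooted at $x$, $y$, or $z$. The natural strategy is to color the triangle edges with one dedicated color, say $c$, and to $2$-color the hanging trees with $a,b$ using the shrub technique of Theorem~\ref{Tm_BaudonSchrub} and Lemma~\ref{Lemma_inversion}. The triangle $K_3$ colored entirely by $c$ is itself locally irregular only if its vertices have distinct $c$-degrees, so the obstruction is that $x,y,z$ all have $c$-degree $2$ within the triangle; this is exactly where the spidey gluing of Lemma~\ref{Lemma_spidey} becomes the key tool.

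The main case split I would make is on how many trees hang off the triangle and what their structure is. If some triangle vertex, say $x$, together with its pendant edges and the triangle forms a spidey $H$ (for instance when the trees at $x$ are short and $x$ has a short leg), then I would invoke Lemma~\ref{Lemma_spidey} directly: write $G=H+K$ where $K$ is the remaining tree attached at a short leg $v$, obtaining a $3$-liec in which all of $E(H)$ carries the single color $c$. The more delicate situation is when no such spidey presents itself, i.e. when the trees attached to the triangle vertices are genuinely large. Then I would break the triangle by recoloring: assign color $c$ to two of the triangle edges and treat the third triangle edge as the root edge of a shrub absorbed into a neighboring tree, reducing $G$ to a disjoint union of trees plus a path through the triangle, each component handled by Theorem~\ref{Tm_BaudonTree}. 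Throughout, the $a$-degree bookkeeping from Lemma~\ref{Lemma_inversion} controls whether the $2$-liec on the hanging trees clashes with the $c$-colored triangle at $x,y,z$.

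The hardest step will be guaranteeing local irregularity \emph{at the three triangle vertices simultaneously}. Inside the triangle each vertex has $c$-degree $2$, so if two adjacent triangle vertices also receive no further $c$-edges they violate the condition; I must ensure that the colors $a,b$ used on the hanging trees resolve these conflicts, or else route one additional $c$-edge to break the tie. This is precisely the obstruction that forces a careful sub-case analysis mirroring Cases~1--4 in the proof of Lemma~\ref{Lemma_spidey}, organized by the number $l$ of shrubs admitting only a proper $2$-aliec at each triangle vertex. I expect that after isolating the finitely many degree configurations at $x,y,z$ (with $\Delta(G)\leq 4$ being the only nontrivial regime, since $\Delta\geq 5$ is covered by reducing to Theorem~\ref{Tm_BaudonTree} on the underlying trees), an explicit color reassignment analogous to $\phi_{c,b}^{1}+\phi_{b,a}^{2}+\phi_{a,b}^{3}$ will clear each remaining obstruction. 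The exceptional configurations that cannot be cleared should be exactly those landing in $\mathfrak{T}^{\prime}$, which the hypothesis $G\not\in\mathfrak{T}^{\prime}$ excludes, so the plan concludes by checking that every inversion-resistant leftover corresponds to a forbidden graph in $\mathfrak{T}$.
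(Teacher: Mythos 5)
Your central strategy---dedicate one color $c$ to the triangle and $2$-color the hanging trees---has a gap that the rest of the sketch does not repair. A triangle whose three edges all receive color $c$ is a $K_3$ in the color class $c$, and $K_3$ is never locally irregular; so at least one triangle vertex must receive an extra $c$-edge from its hanging tree, or one triangle edge must leave the class $c$ altogether. You acknowledge this, but the tool you reach for cannot do the job: a spidey is by definition a \emph{tree}, so the subgraph $H$ consisting of ``a triangle vertex together with its pendant edges and the triangle'' is not a spidey, and Lemma~\ref{Lemma_spidey} does not apply to it. Your fallback (color two triangle edges $c$, absorb the third into a neighboring tree, and handle each component by Theorem~\ref{Tm_BaudonTree}) is also not a proof as stated: independent $3$-liecs of components that share the vertices $u_1,u_2,u_3$ do not combine into a $3$-liec of $G$, and controlling the interaction at those three vertices simultaneously is exactly the part you defer. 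Finally, the hypothesis $G\notin\mathfrak{T}^{\prime}$ enters your argument only as a hope that ``the exceptional configurations land in $\mathfrak{T}^{\prime}$,'' whereas it has to be invoked at a specific point to make the construction go through.

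The paper's proof avoids all of this by a different split of the triangle's edges. Writing $C=u_1u_2u_3$ and letting $T_i$ be the component of $G-E(C)$ containing $u_i$, the hypothesis $G\notin\mathfrak{T}^{\prime}$ is used immediately to choose $u_1$ with $T_1$ not a pendant even-length path. Then $G_1=T_1+u_1u_2$ is a colorable \emph{tree} (one triangle edge is absorbed into it), and Lemma~\ref{Lemma_spidey} is applied \emph{inside this tree} to force all edges of $G_1$ incident to $u_1$ to carry color $c$. The remaining graph $G_0$, containing the other two triangle edges and $T_2,T_3$, is a shrub rooted at $u_1$ with root edge $u_1u_3$; it gets a $2$-aliec in colors $a,b$, and only if that $2$-aliec is proper is the single root edge $u_1u_3$ recolored to $c$, where it safely joins the $c$-star at $u_1$. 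So the triangle is never monochromatic, and the one place where a color conflict could arise is resolved by the aliec mechanism rather than by a simultaneous analysis at all three triangle vertices. You would need to restructure your argument along these lines (or find another way to break the monochromatic $K_3$) for the proof to close.
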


\begin{proof}
Let $C=u_{1}u_{2}u_{3}$ be the $3$-cycle in $G$, let $T_{i}$ denote the
connected component of $G-E(C)$ which contains $u_{i}.$ Since $G\not \in
\mathfrak{T}^{\prime},$ there must exist a vertex $u_{i}$ on $C$ such that
$T_{i}$ is not a pendant even length path, say it is $u_{1}.$ Let $G_{1}%
=T_{1}+u_{1}u_{2}$ and let $G_{0}=G-E(G_{1}).$ First notice that both $G_{0}$
and $G_{1}$ are trees and that $E(G)=E(G_{0})\cup E(G_{1}).$ Since $T_{1}$ is
not a pendant path of even length, it follows that $G_{1}$ is not an odd
length path, hence it is colorable. Let $\phi_{a,b,c}^{1}$ be a $3$-liec of
$G_{1}.$ Without loss of generality we may assume that $\phi_{a,b,c}^{1}%
(u_{1}u_{2})=c.$ Let $H$ be the subgraph of $G_{1}$ induced by all edges
incident to $u_{1}$ in $G_{1}.$ We may assume $\phi_{a,b,c}^{1}(e)=c$ for
every $e\in E(H),$ namely if $d_{H}(u_{1})=2$ this follows from the local
irregularity of $\phi_{a,b,c}^{1},$ otherwise it follows from Lemma
\ref{Lemma_spidey} applied on $H$ and every component of $G_{1}-E(H)$ repeatedly.

Let us now consider the graph $G_{0}$ and notice that it is a shrub rooted at
$u_{1}$ with the root edge $u_{1}u_{3}.$ By Theorem \ref{Tm_BaudonSchrub}
there exists a $2$-aliec $\phi_{a,b}^{0}$ of $G_{0}.$ If $\phi_{a,b}^{0}$ is a
$2$-liec, then $\phi_{a,b,c}=\phi_{a,b}^{0}+\phi_{a,b,c}^{1}$ is a $3$-liec of
$G.$ Otherwise, if $\phi_{a,b}^{0}$ is a proper $2$-aliec of $G_{0}$, we
define the edge coloring $\phi_{a,b,c}$ of $G$ as follows
\[
\phi_{a,b,c}(e)=\left\{
\begin{array}
[c]{ll}%
c & \text{if }e=u_{1}u_{3},\\
\phi_{a,b}^{0}(e) & \text{if }e\in E(G_{0})\backslash\{u_{1}u_{3}\},\\
\phi_{a,b,c}^{1}(e) & \text{if }e\in E(G_{1}).
\end{array}
\right.
\]
It is easily seen that $\phi_{a,b,c}$ is a $3$-liec of $G.$
\end{proof}

Let us now consider unicyclic graphs with larger cycles.

\begin{lemma}
Let $G$ be a unicyclic graph with the unique cycle being of length at least
four. If $G\not \in \mathfrak{T}^{\prime}\mathfrak{,}$ then $\chi_{%
\rm{irr}%
}^{\prime}(G)\leq3.$
\end{lemma}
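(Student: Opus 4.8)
The plan is to mirror the triangle case but to split the cycle $C=u_1u_2\cdots u_n$ (with $n\ge 4$) into a tree part and a single remaining edge, exploiting the freedom that a longer cycle gives us. Since $G\not\in\mathfrak{T}'$, the cycle $C$ is not an odd cycle unless some tree is attached that breaks membership in $\mathfrak{T}'$; in any case I would first locate a vertex $u_i$ on $C$ whose attached tree $T_i$ (the component of $G-E(C)$ containing $u_i$) is not a pendant even-length path. Such a vertex exists precisely because $G\not\in\mathfrak{T}'$; I would argue this by examining the structure of $\mathfrak{T}'$, noting that the only unicyclic members are odd cycles (with no nontrivial attachments) and the graphs in $\mathfrak{T}$ built from a single triangle, so a non-triangular cycle with any non-trivial or non-even-path attachment escapes $\mathfrak{T}'$.

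Relabel so that this distinguished vertex is $u_1$. I would then remove one edge of the cycle incident to $u_1$, say $e^\ast=u_1u_n$, and set $G_1=G-e^\ast$. The key observation is that $G_1$ is a tree (we deleted the unique cycle's edge), and because $T_1$ is not a pendant even path, $G_1$ is not an odd-length path, hence $G_1$ is colorable. Let $\phi^1_{a,b,c}$ be a $3$-liec of $G_1$. As in the triangle lemma, I would arrange via Lemma~\ref{Lemma_spidey} that all edges incident to $u_1$ inside $G_1$ receive a common color, say $c$; this normalization is what lets me add back the deleted edge cleanly. The deleted edge $e^\ast=u_1u_n$ is then colored $c$ as well, and I must check that reintroducing it keeps color $c$ locally irregular at both endpoints $u_1$ and $u_n$.

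The main obstacle will be this last verification at $u_1$ and $u_n$ after restoring $e^\ast$. Adding $e^\ast$ increases the $c$-degree of both $u_1$ and $u_n$ by one, and I need the resulting $c$-subgraph to still have distinct degrees across every $c$-edge. At $u_1$, where I have forced all incident tree-edges to be $c$, the $c$-degree becomes $d_{G_1}^{c}(u_1)+1$; at $u_n$, I would choose the orientation of the cut (i.e.\ which cycle-edge to delete) and possibly invert colors in the shrub hanging at $u_n$ so that $u_n$'s $c$-degree differs appropriately from that of its $c$-neighbors. Because $n\ge 4$, the two cycle-neighbors of $u_1$ are distinct from the two cycle-neighbors of $u_n$, which gives more room than in the triangle case to avoid a degree clash; I expect to handle a small number of subcases according to the $c$-degrees of $u_n$ and its neighbors, using a local inversion or a reassignment of a single edge along the cycle to break any tie. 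If a direct reinsertion fails, the fallback is to color $e^\ast$ with a color other than $c$ by first producing a proper $2$-aliec on the shrub $G_0=G-E(G_1)$-analogue and combining it as in the triangle proof, so that $e^\ast$ becomes an isolated edge of its color and is trivially locally irregular.
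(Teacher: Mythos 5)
Your opening step already fails: for a cycle of length at least four, the hypothesis $G\not\in\mathfrak{T}^{\prime}$ does \emph{not} give you a vertex $u_i$ whose attached tree $T_i$ is not a pendant even path. Since $\mathfrak{T}$ is generated exclusively from triangles, the only unicyclic members of $\mathfrak{T}^{\prime}$ with girth at least $4$ are the odd cycles; so $G\not\in\mathfrak{T}^{\prime}$ says nothing about the attachments. Take $C_{4}$ with a pendant path of length $2$ at one vertex: it is not in $\mathfrak{T}^{\prime}$, yet every $T_i$ is a pendant even path (the trivial ones have length $0$), so your distinguished vertex does not exist. The implication you are importing from the triangle lemma works there only because ``all $T_i$ are pendant even paths'' forces $G\in\mathfrak{T}$ when $C$ is a triangle. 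A second, independent gap: even when the distinguished vertex exists, your $G_1=G-e^{\ast}$ is the \emph{entire} graph minus one cycle edge, and ``$T_1$ is not a pendant even path'' does not make $G_1$ colorable. For $C_5$ with a single pendant edge at $u_1$, the tree $T_1$ is a pendant path of odd length $1$, but $G-u_1u_5$ is a path of length $5$, hence an odd path and not colorable. In the triangle proof the paper's $G_1$ is $T_1$ plus \emph{one} added edge $u_1u_2$, which is why the parity bookkeeping works there; your global deletion destroys that.

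The remaining machinery also does not close. The verification at $u_1$ and $u_n$ after reinserting $e^{\ast}$ — which you correctly identify as the main obstacle — is left to ``a small number of subcases,'' and your fallback rests on a false premise: an edge that is isolated in its color class is \emph{not} locally irregular (both endpoints have color-degree $1$; this is precisely what a proper $2$-aliec fails to achieve), and in your decomposition $G-E(G_1)$ is the single edge $e^{\ast}$, so there is no shrub on which to produce a $2$-aliec. For contrast, the paper first disposes of the pure-cycle case, then picks $u_1$ of \emph{maximum degree} on $C$, isolates the star $H$ at $u_1$ (omitting $u_1u_2$ when $d_G(u_1)\geq4$) as a spidey whose edges are forced to one color via Lemma~\ref{Lemma_spidey}, and treats the long way around the cycle as a shrub rooted at $u_1$ or $u_2$ (or two shrubs at $u_2$ when $d_G(u_2)=3$), resolving the proper-$2$-aliec case by recoloring the root edge $c$ so that it is adjacent to the $c$-colored star at $u_1$ rather than isolated. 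You would need to rebuild your argument along some such lines; as written, the decomposition itself is unsound.
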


\begin{proof}
If $G$ is a cycle, then $G\not \in \mathfrak{T}^{\prime}$ implies that $G$ is
an even length cycle and hence admits a $3$-liec. So, we may assume $G$ is not
a cycle, i.e. at least one vertex from the cycle of $G$ is of degree $\geq3$.
Denote the cycle in $G$ by $C=u_{1}u_{2}\cdots u_{g}$ with $g\geq4.$ Without
loss of generality we may assume that $u_{1}$ is the vertex with maximum
degree among vertices from $C.$ We distinguish the following two cases with
respect to $d_{G}(u_{1}).$

\medskip\noindent\textbf{Case 1:} $d_{G}(u_{1})\geq4.$ Let $E_{1}$ denote the
set of all edges incident to $u_{1}$ in $G$ except the edge $u_{1}u_{2}$ and
let $H$ denote the subgraph of $G$ induced by $E_{1}.$ The assumption
$d_{G}(u_{1})\geq4$ implies $d_{H}(u_{1})\geq3,$ so $H$ is a spidey in which
every leg is short. Let $G_{0}$ be the connected component of $G-E_{1}$ which
contains $u_{2}$ and let $G_{1}=G-E(G_{0}).$ Let $G_{1}^{\prime},\ldots
,G_{k}^{\prime}$ be all connected components of $G_{1}-E(H).$ Each
$G_{i}^{\prime}$ is a tree, so Lemma \ref{Lemma_spidey} can be applied to $H$
and $K=G_{i}^{\prime},$ for every $i=1,\ldots,k$. We conclude that there
exists a $3$-liec $\phi_{a,b,c}^{1}$ of $G_{1}$ such that $\phi_{a,b,c}%
^{1}(e)=c$ for every $e\in E(H).$ On the other hand, $G_{0}$ is a shrub rooted
at $u_{1}$ with the root edge $u_{1}u_{2},$ so $G_{0}$ admits $2$-aliec
$\phi_{a,b}^{0}$ according to Theorem \ref{Tm_BaudonSchrub}.

If $\phi_{a,b}^{0}$ is a $2$-liec of $G_{0},$ then $\phi_{a,b,c}=\phi
_{a,b}^{0}+\phi_{a,b,c}^{1}$ is a $3$-liec of $G.$ Otherwise, if $\phi
_{a,b}^{0}$ is a proper $2$-aliec of $G_{0},$ then we define the edge coloring
$\phi_{a,b,c}$ of $G$ in the following way%
\[
\phi_{a,b,c}(e)=\left\{
\begin{array}
[c]{ll}%
c & \text{if }e=u_{1}u_{2},\\
\phi_{a,b}^{0}(e) & \text{if }e\in E(G_{0})\backslash\{u_{1}u_{2}\},\\
\phi_{a,b,c}^{1}(e) & \text{if }e\in E(G_{1}).
\end{array}
\right.
\]
It is easily seen that thus defined $\phi_{a,b,c}$ is a $3$-liec of $G.$

\medskip\noindent\textbf{Case 2:} $d_{G}(u_{1})=3.$ Let $E_{1}$ be the set of
all edges incident to $u_{1}$ in $G$ and $H$ a subgraph of $G$ induced by
$E_{1}.$ Let $G_{0}$ be the connected component of $G-E_{1}$ which contains
$u_{2}$ and let $G_{1}=G-E(G_{0}).$ Similarly as in the previous case, there
exists a $3$-liec $\phi_{a,b,c}^{1}$ of $G_{1}$ such that $\phi_{a,b,c}(e)=c$
for every $e\in E(H).$ Notice that $d_{G}(u_{2})\in\{2,3\},$ since $u_{1}$ is
the vertex with maximum degree among vertices from $C.$ Now we distinguish two
possibilities with regard to $d_{G}(u_{2}).$

If $d_{G}(u_{2})=2,$ then $G_{0}$ is a shrub rooted in $u_{2}$ with the root
edge $u_{2}u_{3}.$ According to Theorem \ref{Tm_BaudonSchrub}, there exists a
$2$-aliec $\phi_{a,b}^{0}$ of $G_{0}.$ If $\phi_{a,b}^{0}$ is $2$-liec of
$G_{0},$ then $\phi_{a,b,c}=\phi_{a,b}^{0}+\phi_{a,b,c}^{1}$ is a $3$-liec of
$G.$ Otherwise, $\phi_{a,b,c}$ defined by%
\[
\phi_{a,b,c}(e)=\left\{
\begin{array}
[c]{ll}%
c & \text{if }e=u_{2}u_{3},\\
\phi_{a,b}^{0}(e) & \text{if }e\in E(G_{0})\backslash\{u_{2}u_{3}\},\\
\phi_{a,b,c}^{1}(e) & \text{if }e\in E(G_{1}).
\end{array}
\right.
\]
is a $3$-liec of $G.$

If $d_{G}(u_{2})=3,$ then consider $G_{0}$ to be a tree rooted at $u_{2}$
which consists of two shrubs $G_{0}^{\prime}$ and $G_{0}^{\prime\prime},$ the
first with the root edge $u_{2}u_{3}$ and the other with the root edge
$u_{2}v_{2},$ where $v_{2}$ is the only neighbor of $u_{2}$ which does not
belong to the cycle $C$. Theorem \ref{Tm_BaudonSchrub} implies that there
exist $2$-aliecs $\phi_{a,b}^{\prime}$ and $\phi_{a,b}^{\prime\prime}$ of
$G_{0}^{\prime}$ and $G_{0}^{\prime\prime},$ respectively. If both $\phi
_{a,b}^{\prime}$ and $\phi_{a,b}^{\prime\prime}$ are a $2$-liec of the
respective shrub, then $\phi_{a,b,c}=\phi_{a,b}^{\prime}+\phi_{b,a}%
^{\prime\prime}+\phi_{a,b,c}^{1}$ is a $3$-liec of $G.$ If both $\phi
_{a,b}^{\prime}$ and $\phi_{a,b}^{\prime\prime}$ are a proper $2$-aliec of the
respective shrub, then $\phi_{a,b,c}=\phi_{a,b}^{\prime}+\phi_{a,b}%
^{\prime\prime}+\phi_{a,b,c}^{1}$ is a $3$-liec of $G.$ The only remaining
possibility is that precisely one of $\phi_{a,b}^{\prime}$ and $\phi
_{a,b}^{\prime\prime},$ say $\phi_{a,b}^{\prime}$, is a proper $2$-aliec of
the respective shrub. In this case we define the coloring $\phi_{a,b,c}^{0}$
of $G_{0}$ as follows
\[
\phi_{a,b,c}^{0}(e)=\left\{
\begin{array}
[c]{ll}%
c & \text{if }e=u_{2}u_{3},\\
\phi_{a,b}^{\prime}(e) & \text{if }e\in E(G_{0}^{\prime})\backslash
\{u_{2}u_{3}\},\\
\phi_{a,b}^{\prime\prime}(e) & \text{if }e\in E(G_{0}^{\prime\prime}).
\end{array}
\right.
\]
Since $\left\vert V(C)\right\vert \geq4,$ it is easily seen that $\phi
_{a,b,c}=\phi_{a,b,c}^{0}+\phi_{a,b,c}^{1}$ is a $3$-liec of $G.$
\end{proof}

The previous two lemmas yield the following result.

\begin{theorem}
\label{Tm_unicyclic}Let $G$ be a unicyclic graph. If $G\not \in
\mathfrak{T}^{\prime}\mathfrak{,}$ then $\chi_{%
\rm{irr}%
}^{\prime}(G)\leq3.$
\end{theorem}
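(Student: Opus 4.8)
The plan is to derive Theorem~\ref{Tm_unicyclic} as an immediate consequence of the two preceding lemmas via a case split on the length of the unique cycle of $G$. Let $G$ be a unicyclic graph with $G\not\in\mathfrak{T}^{\prime}$, and let $C$ be its unique cycle. Since $G$ is connected and unicyclic, $C$ is well defined and $g=|V(C)|\geq 3$. The two lemmas established above partition exactly this situation: the first treats the case $g=3$ (the cycle is a triangle), and the second treats the case $g\geq 4$ (the cycle has length at least four). In either case the hypothesis $G\not\in\mathfrak{T}^{\prime}$ is exactly the hypothesis required by the applicable lemma.

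First I would observe that every unicyclic graph falls into precisely one of these two classes, so the dichotomy is exhaustive and mutually exclusive. If $g=3$, then $G$ is a unicyclic graph whose unique cycle is a triangle, and since $G\not\in\mathfrak{T}^{\prime}$ the first lemma yields directly that $\chi_{\mathrm{irr}}^{\prime}(G)\leq 3$. If instead $g\geq 4$, then $G$ is a unicyclic graph whose unique cycle has length at least four, and again $G\not\in\mathfrak{T}^{\prime}$, so the second lemma gives $\chi_{\mathrm{irr}}^{\prime}(G)\leq 3$. Combining the two cases proves the theorem.

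There is essentially no obstacle here: the theorem is a packaging result whose entire content has already been discharged by the two lemmas, and the only thing to verify is that the two lemma hypotheses together cover all unicyclic graphs with the same side condition $G\not\in\mathfrak{T}^{\prime}$. One small point worth stating explicitly in the write-up is that $\mathfrak{T}^{\prime}$ is defined for connected graphs and that ``unicyclic'' already forces connectedness and the presence of exactly one cycle, so the case analysis on $g$ is legitimate and the membership condition $G\not\in\mathfrak{T}^{\prime}$ transfers verbatim into each lemma. Accordingly the proof is a single short paragraph invoking the two lemmas according to whether the girth of $G$ equals three or exceeds three.
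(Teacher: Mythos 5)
Your proposal is correct and matches the paper exactly: the paper states the theorem immediately after the two lemmas with the remark that they yield the result, i.e.\ precisely the case split on whether the unique cycle is a triangle or has length at least four. Nothing further is needed.
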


A natural question that arises is whether the bound $\chi_{%
\rm{irr}%
}^{\prime}(G)\leq3$ is tight, i.e. are there colorable unicyclic graphs which
are not $2$-colorable. The family of cycles of length $4k+2$ are such graphs,
but this family is not an isolated case, there exist other unicyclic graphs
which require three colors, for example the graph from Figure
\ref{Fig_unicyclic}. One can assure infinitely many such graphs for example by
taking longer threads of suitable parity in the given graph.

\begin{figure}[h]
\begin{center}
\includegraphics[scale=0.8]{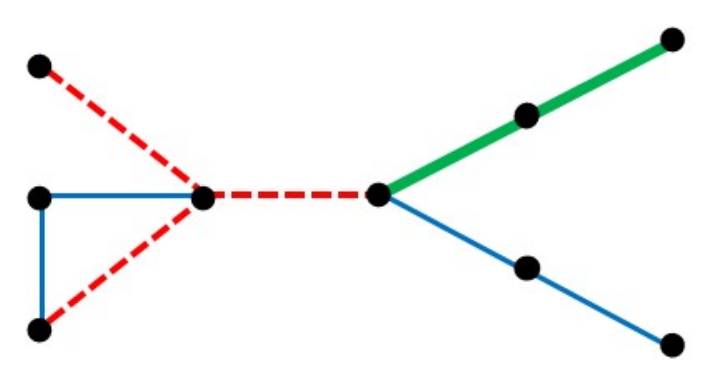}
\end{center}
\caption{A colorable unicyclic graph distinct from cycle which requires $3$
colors for locally irregular edge coloring.}%
\label{Fig_unicyclic}%
\end{figure}

\section{Cacti with vertex disjoint cycles}

In this section we will extend the result from the previous section to cacti
with vertex disjoint cycles. We will also show that the result does not extend
to all cacti by providing an example of a cactus graph with four cycles which
is colorable, but requires $4$ colors for a locally irregular edge coloring.
This establishes that Conjecture \ref{Con_nonTareColorable} does not hold in
general. We first need to introduce several useful notions in order to deal
with cacti.

Let $G$ be a cactus graph with at least two cycles, let $C$ be a cycle in $G$
and let $u$ be a vertex from $C.$ We say that $u$ is a \emph{root vertex} of
$C$ if the connected component of $G-E(C)$ which contains $u$ is a cyclic
graph. A cycle $C$ of $G$ is a \emph{proper end-cycle} if $G-V(C)$ contains at
most one cyclic connected component. Every cactus graph with vertex disjoint
cycles contains at least two proper end-cycles, given it is not a unicyclic graph.

\begin{theorem}
Let $G$ be a cactus graph with vertex disjoint cycles. If $G\not \in
\mathfrak{T}^{\prime}\mathfrak{,}$ then $\chi_{%
\rm{irr}%
}^{\prime}(G)\leq3.$
\end{theorem}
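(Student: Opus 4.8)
The plan is to prove this by induction on the number of cycles in $G$, using the unicyclic result (Theorem \ref{Tm_unicyclic}) as the base case. The key structural fact is that a cactus with vertex disjoint cycles and at least two cycles possesses at least two proper end-cycles, so I can always peel off an end-cycle together with the tree hanging off it and treat the rest by induction. The main technical work will be in gluing the inductively obtained $3$-liec of the smaller cactus with a suitable coloring of the peeled-off piece, exactly mirroring the structure of the two unicyclic lemmas.

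**First I would** set up the induction. For a cactus $G$ with vertex disjoint cycles, if $G$ has exactly one cycle it is unicyclic and the claim follows from Theorem \ref{Tm_unicyclic}. So assume $G$ has at least two cycles and pick a proper end-cycle $C$. Let $u$ be the unique vertex of $C$ through which the rest of the cyclic structure is reached — that is, the root vertex of $C$ in the sense defined just above the statement (the component of $G-E(C)$ containing $u$ is the cyclic part). Decompose $G$ by separating along $u$: let $G_1$ be the ``local'' part built from $C$ together with all acyclic components of $G-E(C)$ hanging off the vertices of $C$, rooted at $u$, and let $G_0$ be the remaining cactus containing the rest of the cycles, sharing only the vertex $u$ with $G_1$. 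By the proper-end-cycle property, $G_0$ is again a cactus with vertex disjoint cycles (and strictly fewer cycles), so either $G_0 \in \mathfrak{T}^{\prime}$ or $G_0$ admits a $3$-liec by the induction hypothesis. The delicate point, to be handled exactly as in the triangle lemma, is that one must arrange the split so that the piece guaranteed \emph{not} to be a forbidden path is placed on the side where a genuine $3$-liec is available, using the hypothesis $G\not\in\mathfrak{T}^{\prime}$.

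**Next I would** produce a $3$-liec on $G_1$ with colors controlled at the shared vertex $u$. Here $G_1$ is precisely a cycle with pendant trees, i.e.\ the unicyclic situation already analyzed: treating $C$ as an end-cycle and $u$ as the attachment vertex, the arguments of the two unicyclic lemmas (splitting into a spidey $H$ at the max-degree vertex, applying Lemma \ref{Lemma_spidey} to the hanging trees, and coloring the rest with a $2$-aliec from Theorem \ref{Tm_BaudonSchrub}) give a $3$-liec $\phi^1_{a,b,c}$ of $G_1$ in which all edges incident to $u$ inside $G_1$ receive a single prescribed color, say $c$. The crux is then the merge across $u$: I would take the $3$-liec $\phi^0_{a,b,c}$ of $G_0$ from induction and combine it with $\phi^1_{a,b,c}$, and the only obstruction to $\sum \phi$ being locally irregular is a degree clash at $u$ between the two $c$-colored sides. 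This is resolved by recoloring a single edge incident to $u$ in $G_0$ to color $c$ (or, symmetrically, adjusting one pendant edge on the $G_1$ side), just as the unicyclic proofs recolor the root edge when only a proper $2$-aliec is available; since $u$ lies on a cycle there is always enough local freedom to break the degree coincidence.

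**The hard part will be** the bookkeeping at the gluing vertex $u$ and, more seriously, the possibility that the inductive piece $G_0$ itself lands in $\mathfrak{T}^{\prime}$ even though $G\not\in\mathfrak{T}^{\prime}$. When $G_0$ is a forbidden graph (an odd path, odd cycle, or a member of $\mathfrak{T}$), one cannot invoke the induction hypothesis directly; instead I would exploit the fact that $G\not\in\mathfrak{T}^{\prime}$ forces the \emph{attachment} of $G_1$ to $G_0$ at $u$ to violate the precise structural rules defining $\mathfrak{T}^{\prime}$. In that case $u$ has degree at least $3$ in $G$ or sits on a cycle of the ``wrong'' parity, which supplies exactly the extra edge or parity slack needed to color $G_0 \cup \{$one boundary edge$\}$ directly with two or three colors, much as an odd path becomes colorable once a single extra edge is attached at an interior-compatible vertex. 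Carefully enumerating these boundary configurations — always choosing the end-cycle and the split so that the non-$\mathfrak{T}^{\prime}$ obstruction is localized on the piece one is currently coloring — is where the proof requires the most care, but each configuration reduces to a finite local check of the kind already carried out in Lemmas \ref{Lemma_inversion} and \ref{Lemma_spidey}.
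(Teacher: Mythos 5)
Your top-level strategy --- induction on the number of cycles, peeling a proper end-cycle $C$ at its root vertex $u$, and gluing a coloring of the unicyclic piece to an inductively obtained $3$-liec of the rest --- is exactly the paper's. The gap is in the merge. You propose to take a $3$-liec of $G_0$ from the induction hypothesis and then ``recolor a single edge incident to $u$ in $G_0$ to color $c$'' to break the degree clash at $u$. That operation is not sound: changing the color of an edge $uw$ in an arbitrary $3$-liec alters two color-degrees at $w$ as well, and can destroy local irregularity at $w$ and at the neighbors of $w$ deep inside $G_0$; nothing about $u$ lying on a cycle provides the claimed ``local freedom''. The paper never recolors an edge of a genuine liec except the root edge of a \emph{proper} $2$-aliec of a shrub, which is safe precisely because that edge is isolated in its color class. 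Its actual device is different: it moves one or both cycle edges at $u_1$ across the cut \emph{before} invoking induction, i.e.\ it applies the induction hypothesis to $G_0'=G_0+u_1u_2$ or to $G_0''=G_0+u_1u_2+u_1u_3$ (still a cactus with fewer cycles, since $C$ is broken), so that $u_1$ has degree $2$ or $3$ there and local irregularity \emph{forces} all of its incident edges into one color class for free; the remaining part of $C$ is then a shrub handled by a $2$-aliec from Theorem \ref{Tm_BaudonSchrub}. You would need this, or an equivalent idea, to make the glue work.

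The second place where your plan is only a gesture is the case where the inductive piece lands in $\mathfrak{T}'$. You say that $G\notin\mathfrak{T}'$ ``supplies exactly the extra edge or parity slack needed'', but the actual reductions are specific and not automatic: when $G_0'=G_0+u_1u_2$ is non-colorable, the paper splits on whether the unicyclic graph $G_1$ is colorable; if $G_1$ is non-colorable and its cycle is a triangle, then $G\in\mathfrak{T}$, contradicting the hypothesis; if $G_1$ is an odd cycle, a bespoke coloring using the two edges $u_1u_3$ and $u_3w$ is constructed; and if $G_1$ is colorable, one argues that deleting $u_1u_2$ from the non-colorable $G_0'$ turns an even pendant path at a triangle into an odd one, so that $G_0\notin\mathfrak{T}'$ and induction applies to $G_0$ after all. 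None of these pivots are identified in your sketch, and without them the ``finite local check'' you defer to is where the whole proof actually lives.
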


\begin{proof}
The proof is by induction on the number of cycles in $G.$ If $G$ is a
unicyclic graph, then the claim holds by Theorem \ref{Tm_unicyclic}. Assume
that the claim holds for all cacti with less than $p$ cycles, where $p\geq2$.
Let $G$ be a cactus graph with $p$ cycles. We will show that $G$ admits a
$3$-liec and this will establish the claim of the theorem. Let $C$ be a proper
end-cycle of $G$, $u_{1}$ the root vertex of $C,$ and $v$ the only neighbor of
$u_{1}$ which belongs to the cyclic component of $G-u_{1}v.$ Denote the other
neighbors of $u_{1}$ by $u_{2},\ldots,u_{k}$ so that $u_{2}$ and $u_{3}$
belong to the cycle $C$. In what follows, we distinguish two cases.

\medskip\noindent\textbf{Case 1:} $d_{G}(u_{1})=3$. Let $G_{1}$ be the
connected component $G-u_{1}v$ which does not contain $v$ and let
$G_{0}=G-E(G_{1}).$ Let $G_{0}^{\prime}=G_{0}+u_{1}u_{2}$ and $G_{1}^{\prime
}=G_{1}-u_{1}u_{2}.$

Suppose first that $G_{0}^{\prime}$ is colorable. Then it admits a $3$-liec
$\phi_{a,b,c}^{0}$ where the edges $u_{1}u_{2}$ and $u_{1}v$ must be colored
by a same color, say color $c$. Notice that $G_{1}^{\prime}$ is a shrub rooted
at $u_{1}$ with the root edge $u_{1}u_{3}.$ By Theorem \ref{Tm_BaudonSchrub},
$G_{1}^{\prime}$ admits $2$-aliec $\phi_{a,b}^{1}$. If $\phi_{a,b}^{1}$ is a
$2$-liec of $G_{1}^{\prime}$, then $\phi_{a,b,c}^{0}+\phi_{a,b}^{1}$ is a
$3$-liec of $G.$ Otherwise, if $\phi_{a,b}^{1}$ is a proper $2$-aliec of
$G_{1},$ then the restriction of $\phi_{a,b}^{1}$ to $G_{1}^{\prime\prime
}=G_{1}^{\prime}-u_{1}u_{3}$ is a $2$-liec of that graph. Notice that
$G_{0}^{\prime\prime}=G_{0}^{\prime}+u_{1}u_{3}$ does not belong to
$\mathfrak{T}^{\prime}$, so it is colorable and by induction hypothesis it
admits a $3$-liec $\phi_{a,b,c}^{\prime\prime}$ in which edges $u_{1}u_{2}$
and $u_{1}u_{3}$ must be colored by a same color (say color $c$) since
$d_{G}(u_{1})=3.$ Now we infer that%
\[
\phi_{a,b,c}(e)=\left\{
\begin{array}
[c]{cc}%
\phi_{a,b}^{1}(e) & \text{if }e\in E(G_{1}^{\prime\prime}),\\
\phi_{a,b,c}^{\prime\prime}(e) & \text{if }e\in E(G_{0}^{\prime\prime}),
\end{array}
\right.
\]
is a $3$-liec of $G$.

Suppose now that $G_{0}^{\prime}$ is not colorable. Assume first $G_{1}$ is
not colorable. Notice that $G_{1}$ is a unicyclic graph, so if the cycle of
$G_{1}$ is a triangle, then the assumption that $G_{0}^{\prime}$ and $G_{1}$
are not colorable would imply $G\in\mathfrak{T}$, a contradiction. Otherwise,
if $G_{1}$ is a unicyclic graph on a larger cycle, then it is not colorable
only if it is an odd length cycle. In this case let $w$ be the only neighbor
of $u_{3}$ distinct from $u_{1},$ let $G_{0}^{\prime\prime}=G_{0}^{\prime
}+\{u_{1}u_{3},u_{3}w\}$ and $G_{1}^{\prime\prime}=G-E(G_{0}^{\prime\prime}).$
Notice that by induction hypothesis $G_{0}^{\prime\prime}$ is colorable and
admits a $3$-liec $\phi_{a,b,c}^{0}$ for which we may assume $\phi_{a,b,c}%
^{0}(u_{3}w)=a$ and $\phi_{a,b,c}^{0}(u_{1}u_{2})\in\{a,b\}.$ Also, notice
that $G_{1}^{\prime\prime}$ is an even length path, so it admits a $2$-liec
$\phi_{b,c}^{1}$ where we may assume that the edge of $G_{1}^{\prime\prime}$
incident to $u_{2}$ is colored by $c.$ Then $\phi_{a,b,c}=\phi_{a,b,c}%
^{0}+\phi_{b,c}^{1}$ is a $3$-liec of $G.$

Suppose now that $G_{1}$ is colorable. Since $G_{0}^{\prime}$ is not
colorable, the edge $u_{1}u_{2}$ of $G_{0}^{\prime}$ must belong to an even
length path hanging at a vertex of a triangle in $G_{0}^{\prime},$ so the
graph $G_{0}=G_{0}^{\prime}-u_{1}u_{2}$ contains an odd length path hanging at
a vertex of a triangle, which means $G_{0}\not \in \mathfrak{T}^{\prime},$ so
it is colorable. Therefore, by induction hypothesis $G_{0}$ admits a $3$-liec
$\phi_{a,b,c}^{0}.$ Since $u_{1}$ is a leaf in $G_{0}$, we may assume that
$\phi_{a,b,c}^{0}(u_{1}v)=c.$ By Theorem \ref{Tm_unicyclic}, $G_{1}$ admits
$3$-liec $\phi_{a,b,c}^{1}.$ Since the degree of $u_{1}$ in $G_{1}$ equals
two, we may assume that the colors of edges $u_{1}u_{2}$ and $u_{1}u_{3}$ are
from $\{a,b\}.$ Therefore, $\phi_{a,b,c}^{0}+\phi_{a,b,c}^{1}$ is a $3$-liec
of $G$.

\medskip\noindent\textbf{Case 2:} $d_{G}(u_{1})\geq4.$ Let $H$ be the subgraph
of $G$ induced by the set of all edges incident to $u_{1}$ in $G.$ Denote the
connected components of $G-E(H)$ in the following way, let $G_{0}^{\prime}$ be
the component which contains $v$ and $G_{1}^{\prime}$ the component which
contains $u_{2}$ and $u_{3}$. Also, let $G_{1}=G_{1}^{\prime}+u_{1}u_{2}$ and
$G_{0}=G_{0}^{\prime}+u_{1}v.$ We may assume $G_{0}$ is colorable, as
otherwise $G$ would contain a proper end-cycle which is a triangle with the
root vertex of degree $3$, which would reduce to the previous case. Let
$G_{2}=G-(E(G_{0})\cup E(G_{1})\cup E(H))$ and $H^{\prime}=H-\{u_{1}%
u_{2},u_{1}v\}.$

Suppose first that the tree $H^{\prime}+G_{2}$ is not colorable. This implies
that it is an odd length path. Notice that $H^{\prime}+G_{2},$ as a shrub
rooted at $u_{3},$ admits a proper $2$-aliec $\phi_{a,b}^{2},$ and since it is
proper we have $\phi_{a,b}^{2}(u_{1}u_{4})=b.$ Since $G_{0}$ is colorable, by
induction hypothesis it admits a $3$-liec $\phi_{a,b,c}^{0},$ where we may
assume $\phi_{a,b,c}^{0}(u_{1}v)=a.$ Since $G_{1}$ is a shrub rooted at
$u_{1}$ with the root vertex $u_{1}u_{2},$ it admits a $2$-aliec $\phi
_{a,b}^{1}.$ If $\phi_{a,b}^{1}$ is a proper $2$-aliec of $G_{1}$, then
\[
\phi_{a,b,c}(e)=\left\{
\begin{array}
[c]{ll}%
c & \text{if }e=u_{1}u_{2}\text{ or }u_{1}u_{3},\\
(\phi_{a,b,c}^{0}+\phi_{a,b}^{1}+\phi_{a,b}^{2})(e) & \text{if }e\in
E(G)\backslash\{u_{1}u_{2},u_{1}u_{3}\},
\end{array}
\right.
\]
is a $3$-liec of $G.$ Otherwise, if $\phi_{a,b}^{1}$ is a $2$-liec of $G_{1},$
then let us consider the graph $G_{0}^{\prime\prime}=G_{0}+u_{1}u_{3}.$ It is
colorable by the same argument as $G_{0},$ so it admits a $3$-liec
$\phi_{a,b,c}^{\prime\prime}$ in which $u_{1}u_{3}$ and $u_{1}v$ must be
colored by a same color, say $c$. Then%
\[
\phi_{a,b,c}(e)=\left\{
\begin{array}
[c]{ll}%
\phi_{a,b,c}^{\prime\prime}(e) & \text{if }e\in E(G_{0}^{\prime\prime}),\\
(\phi_{a,b}^{1}+\phi_{a,b}^{2})(e) & \text{if }e\in E(G)\backslash
E(G_{0}^{\prime\prime}),
\end{array}
\right.
\]
is a $3$-liec of $G.$

Suppose now that $H^{\prime}+G_{2}$ is a colorable tree, so it admits a
$3$-liec $\phi_{a,b,c}^{2}.$ We may assume that $\phi_{a,b,c}^{2}(e)=c$ for
every $e\in E(H^{\prime}),$ as this follows either from $d_{H^{\prime}}%
(u_{1})=2$ or from Lemma \ref{Lemma_spidey} applied to $H^{\prime}$ as a
spidey and every connected component of $G_{2}$ as $K$. As for $G_{0},$ recall
that it is colorable, so by induction hypothesis, it has a $3$-liec
$\phi_{a,b,c}^{0}(e)$. Since $u_{1}$ is a leaf in $G_{0},$ we may assume
$\phi_{a,b,c}^{0}(u_{1}v)=a.$\ Let us now consider the graph $G_{1}.$ Recal
that it is a shrub rooted at $u_{1}$ with the root edge $u_{1}u_{2}.$ Hence,
by Theorem \ref{Tm_BaudonSchrub} the graph $G_{1}$ admits a $2$-aliec
$\phi_{a,b}^{1}.$ If $\phi_{a,b}^{1}$ is a $2$-liec of $G_{1},$ then
$\phi_{a,b,c}^{0}+\phi_{b,a}^{1}+\phi_{a,b,c}^{2}$ is a $3$-liec of $G.$
Otherwise, we define $H^{\prime\prime}=H^{\prime}+u_{1}u_{2}$, and notice that
$H^{\prime\prime}$ is a spidey. According to Lemma \ref{Lemma_spidey}, the
graph $G_{2}^{\prime\prime}=H^{\prime\prime}+G_{2}$ admits a $3$-liec
$\phi_{a,b,c}^{\prime\prime}$ such that $\phi_{a,b,c}^{\prime\prime}(e)=c$ for
every $e\in E(H^{\prime\prime}).$ We conclude that
\[
\phi_{a,b,c}(e)=\left\{
\begin{array}
[c]{ll}%
\phi_{a,b,c}^{\prime\prime}(e) & \text{if }e\in E(G_{2}^{\prime\prime}),\\
(\phi_{a,b,c}^{0}+\phi_{a,b}^{1})(e) & \text{if }e\in E(G)\backslash
E(G_{2}^{\prime\prime}).
\end{array}
\right.
\]
is a $3$-liec of $G.$
\end{proof}

Let us now consider the so called \emph{bow-tie} graph $B$ shown in Figure
\ref{Fig_bowGraph}. This is a cactus graph with four cycles, but in which
cycles are not vertex disjoint. This graph is colorable and admits $4$-liec
shown in Figure \ref{Fig_bowGraph}, but it does not admit $k$-liec for
$k\leq3$ since the two end-vertices of the cut edge must have the degree three
in the color of that edge. Hence, for the bow-tie graph $B$ it holds that
$\chi_{%
\rm{irr}%
}^{\prime}(B)=4.$ We conclude that Conjecture \ref{Con_nonTareColorable} does
not hold in general.

\begin{figure}[h]
\begin{center}
\includegraphics[scale=0.8]{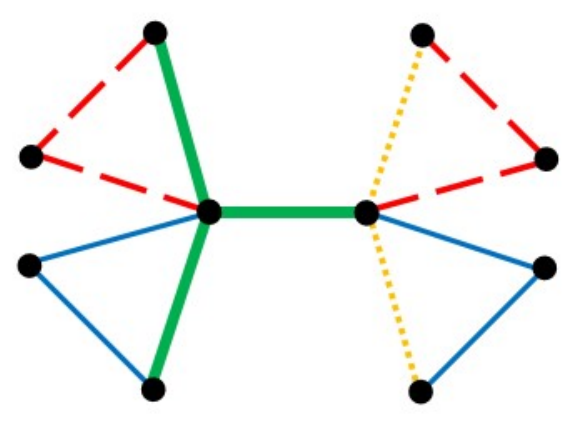}
\end{center}
\caption{The bow-tie graph $B$ and a $4$-liec of it.}%
\label{Fig_bowGraph}%
\end{figure}

The consideration of the bow-tie graph gives rise to the following questions:
are there any other graphs for which Conjecture \ref{Con_nonTareColorable}
does not hold, do all colorable cacti admit a $4$-liec, what is the thight
upper bound on $\chi_{%
\rm{irr}%
}^{\prime}(G)$ of general graphs? We believe the following conjectures holds,
which is a weaker form of the Local Irregularity Conjecture.

\begin{conjecture}
Every connected graph $G$ which does not belong to $\mathfrak{T}^{\prime}$
satisfies $\chi_{%
\rm{irr}%
}^{\prime}(G)\leq4.$
\end{conjecture}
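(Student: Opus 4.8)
The plan is to attack the bound block by block, extending the inductive, shrub-based philosophy of the preceding proofs and spending the fourth color exactly where three provably fail. The natural first target is the class of all cactus graphs, since this is the immediate generalization of Theorem \ref{Tm_unicyclic} and its vertex-disjoint-cactus extension, and the bow-tie $B$ isolates the precise obstruction: a bridge whose two endpoints are each forced into the same color-degree. I would induct on the number of cycles of $G$, choosing a proper end-cycle $C$ with root vertex $u_1$ exactly as in the vertex-disjoint proof, detaching $C$ together with its pendant trees, coloring the remainder by the induction hypothesis, and reattaching. The new phenomenon, absent from the vertex-disjoint case, is that $u_1$ may lie on several cycles at once; whenever reassembly with colors $a,b,c$ would force an edge $u_1 v$ to be monochromatically balanced, as happens for the bridge of $B$, I recolor one further edge incident to the freer endpoint with the fourth color $d$, making the two $d$-degrees across $u_1 v$ unequal. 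The care needed is to choose an edge that can be spared from its original class without breaking local irregularity there, and providing exactly this slack is the role of the fourth color.

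Concretely, I expect Lemmas \ref{Lemma_spidey} and \ref{Lemma_inversion} to transfer essentially verbatim to the four-color setting, so the branch on $d_G(u_1)$ and on which attached shrubs carry proper versus genuine $2$-aliecs reproduces the case analysis already carried out, with the additive construction $\sum_{i}\phi^{i}$ assembling the local colorings along the block-cut-tree of the cactus. The genuinely new configurations are those in which two or more cycles share the cut vertex $u_1$, and for each I would exhibit an explicit coloring of the local star together with its incident cycles; the point is that every block of a cactus is an edge or a cycle, so once each cut vertex is resolved the pieces fit together globally. The subtlety to watch is that the color-degree of a cut vertex accumulates across all of its blocks, so the aliecs must be chosen to pin down each block's contribution to $d^{a}(u_1)$ before the fourth color is deployed.

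For an arbitrary connected graph I would again pass to the block-cut-tree, color each $2$-connected block within its palette, and reconcile the colorings at the cut vertices using $d$ to repair any collision produced by the gluing, in direct analogy with the cactus argument. The hard part, and the place where this plan is frankly incomplete, is a single dense $2$-connected block: the techniques of this paper are built for blocks that are edges or cycles and give no purchase on, say, a large $2$-connected graph of small minimum degree, which cannot be peeled into tree-like pieces and to which neither the high-minimum-degree nor the regular-graph results quoted in the introduction apply. A complete proof would need a new mechanism bounding such a block's own irregular chromatic index while controlling the color-degrees it forces on its cut vertices, and it is precisely this gap that keeps the statement a conjecture rather than a theorem.
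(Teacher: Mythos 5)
This statement is a \emph{conjecture} in the paper, not a theorem: the authors state it as an open problem (a weakened form of the Local Irregularity Conjecture, motivated by their bow-tie counterexample $B$ with $\chi_{\rm{irr}}^{\prime}(B)=4$) and offer no proof. So there is no argument in the paper to compare yours against, and your proposal cannot be judged ``essentially the same'' as anything. What can be judged is whether your plan constitutes a proof, and it does not --- as you yourself concede in the final paragraph. The gap you name is real and is precisely why the statement remains open: for a dense $2$-connected block the shrub/aliec machinery of Theorems \ref{Tm_BaudonSchrub} and \ref{Tm_BaudonTree} and Lemmas \ref{Lemma_inversion} and \ref{Lemma_spidey} gives no purchase, since those tools all rest on peeling off rooted trees at a cut vertex. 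The best known general bound at the time of this paper was $\chi_{\rm{irr}}^{\prime}(G)\leq 220$, and closing the distance from $220$ to $4$ is not a matter of ``reconciling colorings at cut vertices''; it requires a genuinely new idea for $2$-connected graphs.

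Beyond the gap you acknowledge, there are unproved claims even in the part of the plan you present as routine. The assertion that Lemmas \ref{Lemma_spidey} and \ref{Lemma_inversion} ``transfer essentially verbatim'' to cacti with intersecting cycles is not obvious: Lemma \ref{Lemma_inversion} depends on the pieces rooted at $v$ being shrubs (trees rooted at a leaf), and when $v$ lies on two cycles the components of $G-v$ reattach along two edges each, so the $2$-aliec inversion argument and the $a$-sequence case analysis do not carry over without reworking. Likewise, ``for each configuration I would exhibit an explicit coloring'' is a promissory note, not an argument --- and the bow-tie graph shows these configurations can genuinely fail with three colors, so each would need an explicit four-color construction together with a verification that the fourth color does not propagate new conflicts through the block-cut-tree. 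Even the restriction of the conjecture to all cacti is left open by the paper (the authors explicitly ask whether all colorable cacti admit a $4$-liec), so your first ``block'' of the plan is itself an open problem, not a routine extension.
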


\bigskip\noindent\textbf{Acknowledgments.}~~Both authors acknowledge partial
support of the Slovenian research agency ARRS program\ P1-0383 and ARRS
project J1-1692. The first author also the support of Project
KK.01.1.1.02.0027, a project co-financed by the Croatian Government and the
European Union through the European Regional Development Fund - the
Competitiveness and Cohesion Operational Programme.

\end{document}